\newcommand{\tcercle}[1]{\ensuremath{\setlength{\unitlength}{1ex}\begin{picture}(2.8,2.8)\put(1.4,1.4){\circle{2.8}\makebox(-5.6,0){#1}}\end{picture}}}
\newcommand{\ct}[2]{\textrm{ct}({#1,#2})}
\newcommand{\omitt}[1]{}
\newcommand{\Q}{\mathbb Q}
\DeclareMathOperator{\sign}{sign}
\DeclareMathOperator{\Rev}{rev}
\newcommand{\ga}{\alpha}
\newcommand{\gb}{\beta}
\newcommand{\fund}[1]{L_{#1}}
\newcommand{\funda}{\fund{\ga}}
\newcommand{\te}{\theta}
\def\SPar{\text{SPar}}
\def\SParnm{\SPar(n|m)}
\def\defstyle{\bf }
\numberwithin{equation}{section}
\theoremstyle{definition}
  \newtheorem{theorem}{Theorem}[section]
  \newtheorem{proposition}[theorem]{Proposition}
  \newtheorem {corollary}[theorem]{Corollary}
  \newtheorem{definition}[theorem]{Definition}
  \newtheorem{example}[theorem]{Example}
\theoremstyle{remark}
\newtheorem*{acknow}{Acknowledgments}
\newsavebox{\smlmat}
\savebox{\smlmat}{$\left(\begin{smallmatrix}1&2&3&4&5&6\\2&4&5&3&6&1\end{smallmatrix}\right)$}
\newsavebox{\smlmatInv}
\savebox{\smlmatInv}{$\left(\begin{smallmatrix}1&2&3&4&5&6\\6&1&4&2&3&5\end{smallmatrix}\right)$}
\title{Hopf algebra structure of symmetric and quasisymmetric functions in superspace}
\author{Susanna Fishel}
\address{School of Mathematical and Statistical Sciences, Arizona State University, P.O. Box 871804, Tempe, AZ 85287-1804, USA} \email{sfishel1@asu.edu}
\author{Luc Lapointe}
\address{Instituto de Matem\'atica y F\'{\i}sica, Universidad de Talca, Casilla 747, Talca,
  Chile} \email{lapointe@inst-mat.utalca.cl}
\author{Mar\'{\i}a Elena Pinto}
\address{Instituto de Matem\'atica y F\'{\i}sica, Universidad de Talca, Casilla 747, Talca,
Chile} \email{mepinto@inst-mat.utalca.cl}
\keywords{Symmetric functions, quasisymmetric functions, Hopf algebras, superspace}
\date{\today}
\begin{document}
\maketitle
\begin{abstract}
  We show that the ring of symmetric functions in superspace is a cocommutative and self-dual Hopf algebra. We provide formulas for the action of the coproduct and the antipode on various bases of that ring.
  We introduce the ring ${\rm sQSym}$ of quasisymmetric functions in superspace and show that it is a Hopf algebra.  We give explicitly the product, coproduct and antipode on the basis of monomial quasisymmetric functions in superspace.  We prove that the Hopf dual of ${\rm sQSym}$, the ring ${\rm sNSym}$ of noncommutative symmetric functions in superspace,  has a multiplicative basis dual to the monomial quasisymmetric functions in superspace.
   \end{abstract}

\section{Introduction}
An extension to superspace of the theory of symmetric functions, originating 
from 
the study of the supersymmetric generalization of the trigonometric Calogero-Moser-Sutherland model,  was developed in \cite{BDLM2,DLM1,DLM2}.  
In this superspace setting, the polynomials
$f(x,\theta)$, where $(x,\theta)=(x_1,\dots,x_N,\theta_1,\dots,\theta_N)$, not only depend on the usual commuting variables
$x_1,\dots,x_N$ but also on the anticommuting variables 
$\theta_1,\dots,\theta_N$ (such that $\theta_i \theta_j=- \theta_j \theta_i, \text{~and~} \theta_i^2=0)$.  Natural generalizations of the
monomial, power-sum, elementary and homogeneous symmetric functions, as well
as of the Schur \cite{BM,JL}, Jack and Macdonald polynomials have been studied.
To illustrate the surprising richness of the theory of symmetric functions in superspace, we could mention that there is even an extension to superspace of the original Macdonald positivity conjecture.

The ring ${\rm QSym}$ of quasisymmetric functions, which can be seen as a refinement of the ring of symmetric functions, was introduced in \cite{Gessel} while its Hopf algebra structure was studied in \cite{E}.
The ring  ${\rm QSym}$ has many applications to symmetric function theory
such as the  elegant expansion of Macdonald polynomials
in terms of fundamental quasisymmetric functions \cite{HHL}.
The  Hopf dual of ${\rm QSym}$, 
the ring of noncommutative symmetric functions ${\rm NSym}$, was defined in \cite{GKLLRT}.

In this article, we undertake to extend to superspace the rich connection between symmetric function theory and quasisymmetric functions.    
As as first step in this direction, the goal of this article is
twofold: (1) extend to superspace
the well-known Hopf algebra structure of the ring of symmetric functions  and (2) introduce the ring of quasisymmetric functions in superspace ${\rm sQSym}$ and understand its Hopf algebra structure as well as its Hopf dual,  the ring
of noncommutative symmetric functions in superspace ${\rm sNSym}$.

To obtain the Hopf algebra structure of the ring ${\mathbf \Lambda}$
of symmetric functions in superspace 
turns out to be relatively straightforward.  We can give explicitly
the coproduct on the basis of power-sum, elementary and homogeneous symmetric functions.  Less trivial to obtain are formulas for the coproduct on the Schur functions in superspace $s_\Lambda$ and $\bar s_\Lambda$, which we derive from Cauchy-type identities.  We then use these formulas to show that the ring of symmetric functions in superspace is a cocommutative and self-dual Hopf algebra.  The action of the antipode $S$ on these various bases is obtained by relating $S$ to a certain well understood involution $\omega$ on ${\mathbf \Lambda}$.

The ring of quasisymmetric functions in
superspace ${\rm sQSym}$ is obtained naturally from  ${\rm QSym}$ by allowing
each variable $x_i$ to be paired with an anticommuting variable $\theta_i$ (which gives rise to the concept of dotted composition).  The coproduct in 
${\rm sQSym}$, the main ingredient needed to obtain its Hopf algebra structure,  is defined as in the non-supersymmetric case by establishing a correspondence between  the splitting of two alphabets and the tensor product ${\rm sQSym} \otimes {\rm sQSym}$.  The product, coproduct and antipode is then given explicitly on the basis of  monomial quasisymmetric functions in superspace, with the formulas being somewhat more complicated than in the usual case
due to the presence of anticommuting variables.
We define two families of fundamental quasisymmetric functions in superspace but, as discussed in Section~\ref{Secfund}, we will relegate their study to a forthcoming article \cite{FLP} given the intricacies of the combinatorics at play.

Finally, we introduce the ring of noncommutative symmetric functions in superspace ${\rm sNSym}$ as the Hopf dual of ${\rm sQSym}$.  Just as in the usual quasisymmetric case, it has a multiplicative basis dual to the monomial quasisymmetric functions in superspace.  We show how  the projection of
${\rm sNSym}$ onto ${\mathbf \Lambda}$ is still compatible with the inclusion of ${\mathbf \Lambda}$ into ${\rm sQSym}$ (see Corollary~\ref{corofinal}).

\section{Hopf algebras}  \label{SecHopf}
We give a brief overview of Hopf algebras based on
\cite{D,GR,LMW}.

In the following, we consider $\mathbb K$ to be a field of characteristic 0 (that we will later always take to be $\mathbb Q$).

An {\defstyle associative algebra} $(\mathcal H,m,u)$ is a  $\mathbb K$-algebra $\mathcal H$
with a $\mathbb K$-linear multiplication (or product) $m: \mathcal H \otimes \mathcal H \to \mathcal H$
and a  $\mathbb K$-linear unit map $u:\mathbb K \to \mathcal H$ such that
\begin{equation}
  m \circ (m\otimes 1)= m \circ (1 \otimes m):
  \mathcal H \otimes \mathcal H \otimes \mathcal H \to \mathcal H\, ,   \qquad m(u(k) \otimes a)=ka \qquad {\rm and} \qquad
  m(a\otimes u(k))=ak
    \end{equation}  
for any $a\in \mathcal H$ and $k\in\mathbb K$.  For simplicity, we
often write the product of $a$ and $b$ as $ab$ instead of $m(a\otimes b)$.

We say that $f: \mathcal H \to \mathcal H'$, where $(\mathcal H',m',u')$ is another associative algebra over $\mathbb K$,  
is an {\defstyle algebra morphism} if
\begin{equation} \label{morphism}
f \circ m = m' \circ (f\otimes f) \qquad {\rm and} \qquad f \circ u= u'
\end{equation}  

A  {\defstyle coassociative algebra} $(\mathcal H,\Delta,\epsilon)$ is a  $\mathbb K$-algebra $\mathcal H$
with a $\mathbb K$-linear comultiplication (or coproduct) $\Delta: \mathcal H \to  \mathcal H \otimes \mathcal H$
and a $\mathbb K$-linear counit $\epsilon:\mathcal H \to \mathbb K$ such that
\begin{equation}
  (\Delta \otimes 1)\circ  \Delta= (1 \otimes \Delta) \circ \Delta :
   \mathcal H \to \mathcal H \otimes \mathcal H \otimes \mathcal H\, , \qquad (\epsilon \otimes 1) \circ \Delta(a) = 1\otimes a \qquad {\rm and} \qquad
  (1 \otimes \epsilon) \circ \Delta(a) = a\otimes 1
    \end{equation}  
for any $a \in \mathcal H$.  We say that $f: \mathcal H \to \mathcal H'$, where $(\mathcal H',\Delta',\epsilon')$ is another coassociative algebra over $\mathbb K$,  is a {\defstyle coalgebra morphism} if
\begin{equation} \label{comorphism}
\Delta' \circ f= (f \otimes f) \circ \Delta\, , \qquad {\rm and } \qquad \epsilon= \epsilon' \circ f 
\end{equation}  

A {\defstyle bialgebra} $(\mathcal H,m,u,\Delta,\epsilon)$ 
is an associative algebra $(\mathcal H,m,u)$ together with a coassociative algebra
$(\mathcal H,\Delta,\epsilon)$ such that either
  $(i)$ $\Delta$ and $\epsilon$
are algebra morphisms or $(ii)$ $m$ and $u$ are coalgebra morphisms.
A bialgebra $\mathcal H$ is said to be {\defstyle graded} if it
has submodules $\mathcal H^0,\mathcal H^1,\dots$
such that
\begin{itemize}
\item $\mathcal H = \bigoplus_{n\geq 0} \mathcal H^n$
\item $\mathcal H^i \mathcal H^{j} \subseteq \mathcal H^{i+j}$
 \item $\Delta(\mathcal H^n) \subseteq    \bigoplus_{i+j=n} \mathcal H^i \otimes \mathcal H^j$ 
\end{itemize}  
If $\mathcal H^0$ has dimension 1 over $\mathbb K$, we say moreover that
$\mathcal H$ is {\defstyle connected}.

Finally, a bialgebra $\mathcal H$ is said to be a {\defstyle Hopf algebra}
if there exists a $\mathbb K$-linear map ({\defstyle antipode}) $S: \mathcal H \to \mathcal H$ such that
\begin{equation} \label{defantipode}
m\circ (S \otimes 1) \circ \Delta = u \circ \epsilon = m\circ (1 \otimes S) \circ \Delta 
\end{equation}

We will need the following theorem.
\begin{theorem} \label{hopftheo}
  Every graded connected bialgebra $\mathcal H$ is a Hopf algebra
  with unique antipode defined recursively by the conditions $S(1)=1$ and 
  \begin{equation} \label{eqantipo}
    S(a) = -\sum_{i=0}^{n-1} S(b_i)  c_{n-i} \qquad {\rm whenever }\qquad
    \Delta(a) = a \otimes 1 + \sum_{i=0}^{n-1} b_i \otimes c_{n-i}
  \end{equation}  
  for $a \in \mathcal H^n$, $n \geq 1$, and $b_i,c_i \in \mathcal H^i$.
  \end{theorem}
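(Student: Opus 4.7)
The strategy is to induct on the grading and use the convolution algebra structure on $\mathrm{End}_\mathbb{K}(\mathcal H)$ to handle existence, the right-sided antipode axiom, and uniqueness simultaneously.

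First I would unpack the hypothesis. Since $\Delta$ preserves the grading and $\mathcal H^0 = \mathbb K \cdot 1$, the counit $\epsilon$ restricted to $\mathcal H^n$ must vanish for $n\geq 1$ (this follows from $\epsilon$ being an algebra map together with $\mathcal H^0 \cong \mathbb K$, or equivalently from the counit axiom applied to $1$). For $a \in \mathcal H^n$ with $n\geq 1$, the axiom $(1\otimes \epsilon)\circ \Delta(a) = a\otimes 1$ combined with $\Delta(a) \in \bigoplus_{i+j=n} \mathcal H^i \otimes \mathcal H^j$ forces $\Delta(a)$ to have the form
\begin{equation*}
\Delta(a) = a\otimes 1 + \sum_{i=0}^{n-1} b_i \otimes c_{n-i}, \qquad b_i \in \mathcal H^i,\ c_{n-i} \in \mathcal H^{n-i},
\end{equation*}
so the decomposition in the statement is intrinsic and not an extra assumption.

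For existence, I would define $S$ recursively as in \eqref{eqantipo}, setting $S(1)=1$ and extending $\mathbb K$-linearly. The recursion is well-posed because every $b_i$ appearing on the right lies in $\mathcal H^i$ with $i<n$, so $S(b_i)$ is already defined. I would then verify the \emph{left} antipode identity $m\circ(S\otimes 1)\circ \Delta = u\circ \epsilon$ by direct inspection: on $\mathcal H^0$ both sides send $1$ to $1$, and on $a\in \mathcal H^n$ with $n\geq 1$, one computes $m\circ(S\otimes 1)\circ \Delta(a) = S(a) + \sum_{i=0}^{n-1} S(b_i)c_{n-i}$, which is zero by the very definition of $S(a)$, matching $u\circ \epsilon(a)=0$.

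To obtain the \emph{right} antipode identity, and to prove uniqueness, I would invoke the convolution algebra: equip $\mathrm{End}_\mathbb{K}(\mathcal H)$ with the product $f*g = m\circ(f\otimes g)\circ \Delta$, which is associative (coassociativity and associativity of $m$) and has $u\circ \epsilon$ as two-sided unit. The left identity above reads $S*\mathrm{id}_{\mathcal H}= u\circ\epsilon$. Running an entirely symmetric recursion
\begin{equation*}
S'(a) = -\sum_{i=0}^{n-1} b_i\, S'(c_{n-i})
\end{equation*}
produces an endomorphism $S'$ satisfying $\mathrm{id}_{\mathcal H}*S' = u\circ \epsilon$. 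Associativity of $*$ then gives $S = S*(u\circ\epsilon) = S*(\mathrm{id}\,*\,S') = (S*\mathrm{id})*S' = (u\circ\epsilon)*S' = S'$, so $S$ is a two-sided inverse to $\mathrm{id}_{\mathcal H}$ in the convolution algebra, i.e.\ a bona fide antipode. Exactly the same associativity argument shows that any two antipodes must coincide with this common two-sided convolution inverse, giving uniqueness.

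The main obstacle, and the only nontrivial point beyond bookkeeping, is the passage from the one-sided recursive definition to the two-sided antipode axiom; the convolution algebra packaging of $m$ and $\Delta$ is what makes this transparent, and the key input it requires is precisely the well-definedness of the symmetric recursion, which in turn rests on the decomposition of $\Delta(a)$ established in the first step.
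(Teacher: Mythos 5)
The paper does not actually prove Theorem~\ref{hopftheo}: it is stated as a known result and implicitly deferred to the cited references \cite{D,GR,LMW}, so there is no in-paper argument to compare against. Your proof is the standard one from those sources (the convolution-algebra argument, essentially Takeuchi's lemma restricted to $\mathrm{id}_{\mathcal H}$), and the overall structure --- derive the intrinsic form of $\Delta(a)$ from the counit axiom and connectedness, define $S$ by the one-sided recursion, then upgrade to a two-sided inverse and get uniqueness via associativity of $*$ --- is correct and complete in its essentials. One small repair is needed: as literally written, the symmetric recursion $S'(a)=-\sum_{i=0}^{n-1} b_i\,S'(c_{n-i})$ is not well-posed, because the $i=0$ term of the decomposition is $b_0\otimes c_n=1\otimes a$ (this is forced by the other counit axiom $(\epsilon\otimes 1)\circ\Delta(a)=1\otimes a$ together with $\mathcal H^0=\mathbb K\cdot 1$), so the right-hand side contains $S'(a)$ itself. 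You should first isolate that term and write $\Delta(a)=a\otimes 1+1\otimes a+\sum_{0<i<n} b_i\otimes c_{n-i}$, after which $S'(a)=-a-\sum_{0<i<n} b_i\,S'(c_{n-i})$ genuinely recurses on strictly lower degree; the rest of your argument ($S=S*(\mathrm{id}*S')=(S*\mathrm{id})*S'=S'$, hence a two-sided antipode, and uniqueness of two-sided convolution inverses) then goes through unchanged. It is also worth noting that the paper later applies this theorem to $\mathbf{\Lambda}$ and ${\rm sQSym}$ with the signed (topologist's) twist on tensor products; your convolution argument survives in that graded-commutative setting since $*$ remains associative with unit $u\circ\epsilon$, but a careful write-up would say so explicitly.
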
  

Two Hopf algebras $\mathcal H$ and $\mathcal H'$ are dually paired by a
$\mathbb K$-bilinear map $\langle \cdot, \cdot \rangle: \mathcal H' \otimes \mathcal H \to \mathbb K$
whenever
\begin{align}
  & \langle fg, a \rangle =  \langle f \otimes g , \Delta(a) \rangle\, , \qquad
  \langle f, ab \rangle =  \langle \Delta'(f), a \otimes b  \rangle\, , \nonumber \\ \label{selfdual}
  & \langle 1, a \rangle = \epsilon(a)\, , \qquad 
 \langle f,1 \rangle = \epsilon'(f)\, , \qquad {\rm and} \qquad \langle S'(f), a\rangle = \langle f, S(a) \rangle   
\end{align}
for any $f,g \in \mathcal H'$ and $a,b \in \mathcal H$. In the previous equation, the pairing
$ \langle \cdot, \cdot \rangle: (\mathcal H' \otimes \mathcal H') \otimes (\mathcal H \otimes \mathcal H) \to \mathbb K$
is defined as the composition of maps
\begin{equation} \label{scaltau}
  \mathcal H' \otimes \mathcal H' \otimes \mathcal H \otimes \mathcal H
  \xrightarrow{1\otimes \tau \otimes 1}  \mathcal H' \otimes \mathcal H \otimes \mathcal H' \otimes \mathcal H
  \xrightarrow{ \langle \cdot, \cdot \rangle \otimes   \langle \cdot, \cdot \rangle}  \mathbb K(1\otimes1) \to \mathbb K
\end{equation}
where the last map simply sends $1\otimes 1$ to 1
and where the twist map $\tau: \mathcal H' \otimes \mathcal H \to \mathcal H \otimes \mathcal H'$ will be taken in this article to be such that
\begin{equation} \label{twistmap}
\tau (f \otimes a) = (-1)^{\deg(f) \deg(a)} a \otimes f
\end{equation}
whenever $\mathcal H$ and $\mathcal H'$ are graded bialgebras
and $f$ and $a$ are homogeneous elements\footnote{This is the topologist's twist map usually employed when the algebras come from
  homology or cohomology \cite{GR}. We will see later that in our case the grading will be the fermionic degree.}. Note that this amounts to
\begin{equation}
\langle f \otimes g, a \otimes b \rangle =  (-1)^{\deg(f) \deg(a)} \langle f,a \rangle \langle g, b\rangle
  \end{equation}
A non-degenerate pairing satisfying \eqref{selfdual}
always exists if $\mathcal H$ is graded and each of its homogeneous component is finite dimensional. 
When $\mathcal H$ is dually paired to itself we say that $\mathcal H$ is {\defstyle self-dual}.

We should mention finally that, because we use the twist map \eqref{twistmap},
the antipode defined in \eqref{defantipode} is a signed anti-homomorphism such that $S(1)=1$ and 
$S(ab)= (-1)^{\deg(a) \deg(b)}  S(b)S(a)$ for all homogeneous elements $a,b \in \mathcal H$.

\section{Symmetric functions in superspace}

We now present the main concepts of the
theory of symmetric functions in superspace
\cite{BDLM2,DLM1,DLM2}.
\begin{definition}
  \label{def:superpart}
  A {\defstyle superpartition} $\Lambda \in \SPar$ is a pair of partitions
  $(\Lambda^a; \Lambda^s) = (\Lambda^a , \Lambda^s ) = (\Lambda_1,
  \ldots ,\Lambda_m ; \Lambda_{m+1}, \ldots ,\Lambda_N)$, where $\Lambda^a$
  is a partition with $m$ distinct parts and $\Lambda^s$ is a usual
  partition (possibly including a string of 0's a the end).
\end{definition}

We will sometimes denote superpartitions using {\defstyle dotted}
partitions, where we dot the parts from $\Lambda^a$. For instance,
$(4,3,2;4,4,3,1,1,1)$, $(\dot{4},4,4,\dot{3},3,\dot{2},1,1,1)$, and
$(4,3,2;4^2,3,1^3)$ all denote the same superpartition.

Let $\Lambda$ be a superpartition written as in
Definition~\ref{def:superpart}. The {\defstyle total degree} of $\Lambda$ is
$\sum_{i=1}^N\Lambda_i$ and is written $|\Lambda|$. Its {\defstyle
  fermionic degree (or sector)} is $m$. We say $\Lambda$ is a superpartition of $(n|m)$ if
its total degree is $n$ and its fermionic sector is $m$.  The {\defstyle length} of
$\Lambda$, denoted $\ell(\Lambda)$,
is equal to $\ell(\Lambda^s)+m$, where $m$ is the fermionic degree of
$\Lambda$ and $\ell(\Lambda^s)$ is the usual length of the partition
$\Lambda^s$.
The set of all
superpartitions of $(n|m)$ is denoted $\SParnm$. We also define
$\SPar$ to be the set of all superpartitions.

Superpartitions can be represented by a Ferrers' diagram where the dotted
entries in the corresponding dotted partitions have an extra circle at the end.
For instance, the superpartition $(3,1,0;2,1)$ is represented by 
\begin{equation}
{ \scriptsize        {\tableau[scY]{&&&\bl\tcercle{}\\&\\&\bl\tcercle{}\\ \\
    \bl\tcercle{}}}}
\end{equation}
The {\defstyle conjugate} $\Lambda'$ of the superpartition $\Lambda$ is the superpartition
whose diagram is that of $\Lambda$ reflected through the main diagonal.  Conjugating
the previous diagram gives 
\begin{equation}
{ \scriptsize  {\tableau[scY]{&&&&\bl\tcercle{}\\&&\bl\tcercle{}\\ \\
    \bl\tcercle{}}}}
\end{equation}
which means that the
conjugate of $(3,1,0;2,1)$ is $(4,2,0;1)$.

Before defining the ring of symmetric functions in superspace, we need
to define the analogues of the monomial symmetric functions.

\subsection{Monomial symmetric functions.} 
Let $\SPar_N$ be the set of superpartitions whose length is at most $N$.
The functions
$m_\Lambda(x_1,\dots,x_N;\theta_1,\dots,\theta_N)$, $\Lambda\in\SPar_N$, generalize the
monomial symmetric functions. They are defined by
\begin{equation}
  m_\Lambda(x_1,\dots,x_N;\theta_1,\dots,\theta_N)
  = {\sum_{\sigma \in S_N}}' \theta_{\sigma(1)} \ldots
\theta_{\sigma(m)} x_{\sigma(1)}^{\Lambda_1} \ldots
x_{\sigma(N)}^{\Lambda_N},
\end{equation}
where the prime indicates that the sum is over distinct terms.
\begin{example} If $N=3$, we have
$m_{(2;3,1)} =\te_1 x_1^2 x_2^3 x_3 + \te_1 x_1^2 x_3^3 x_2 + \te_2 x_2^2 x_1^3 x_3 + \te_2 x_2^2 x_3^3 x_1 + \te_3 x_3^2 x_1^3 x_2 + \te_3 x_3^2 x_2^3 x_1  $
\end{example}

\subsection{Ring of symmetric functions in superspace}
It is known that 
$$\{ m_\Lambda(x_1,\dots,x_N;\theta_1,\dots,\theta_N) \}_{\Lambda \in \SPar_N}$$ is a basis of the ring ${\bf \Lambda}_N=\mathbb Q[x_1,\dots,x_N;\theta_1,\dots,\theta_N]^{S_N}$
of symmetric functions in superspace in $N$ variables, where $S_N$ is the symmetric group on $N$ elements.  Note that $S_N$ acts diagonally on the two sets of variables, that is, 
\begin{equation}
  \sigma \, f(x_1,\dots,x_N;\theta_1,\dots,\theta_N)= f(x_{\sigma(1)},\dots,x_{\sigma(N)};\theta_{\sigma(1)},\dots,\theta_{\sigma(N)})
\end{equation}  
for any $\sigma \in S_N$ and any
$f\in {\bf \Lambda}_N$.

The monomial symmetric functions in superspace are stable with respect to
the number of variables.  This allows to consider the number of variables $N$ to be  infinite, or equivalently, to consider $m_\Lambda$ as the inverse limit of the monomial in a finite number of variables  $m_\Lambda(x_1,\dots,x_N;\theta_1,\dots,\theta_N)$.  Given that ${\bf \Lambda}_N$ is bi-graded with respect to the
total degree and the fermionic degree, we can define the ring of symmetric functions in superspace as
\begin{equation}
  {\bf \Lambda} = \bigoplus_{n,m \geq 0} {\bf \Lambda}^{n,m}
\end{equation}
where
\begin{equation}
{\bf \Lambda}^{n,m} = \left \{ \sum_\Lambda c_{\Lambda} m_\Lambda \, | \, c_\Lambda \in \mathbb Q, \,  \Lambda \in \SParnm  \right \}
\end{equation}  
At times, it will also be convenient to use the simple grading with respect to the sum of the total degree and the fermionic degree:
\begin{equation} \label{newgrad}
 {\bf \Lambda}^n=  \bigoplus_{j+k=n} {\bf \Lambda}^{j,k}
\end{equation}

All the other important bases of the ring of symmetric functions have generalizations to
superspace. We now describe the analogues of the elementary, homogeneous, and power sum symmetric functions.  Later in the section, we will introduce the analogues of the 
Schur symmetric functions.
\subsection{Elementary, homogeneous, and power sum symmetric functions}
\begin{itemize}
\item The power-sum 
symmetric
functions in superspace are  $p_\Lambda=\tilde{p}_{\Lambda_1}\cdots\tilde{p}_{\Lambda_m}p_{\Lambda_{m+1}}\cdots p_{\Lambda_{\ell}}\, $,
\begin{equation} 
\text{~~~{where}~~~}
\tilde{p}_k=\sum_{i=1}^N\theta_ix_i^k\qquad\text{and}\qquad p_r=
\sum_{i=1}^Nx_i^r \, , \text{~~~~for~~} k\geq 0,~r \geq 1;
\end{equation}  

\item 
The elementary
symmetric
functions in superspace are 
$e_\Lambda=\tilde{e}_{\Lambda_1}\cdots\tilde{e}_{\Lambda_m}e_{\Lambda_{m+1}}\cdots e_{\Lambda_{\ell}}$ ,

\begin{equation} \text{~~~where~~~} \tilde{e}_k=m_{(0;1^k)}
\quad \text{and} \quad e_r=m_{(\emptyset;1^r)}
, \text{~~~~for~~} k\geq 0,~r \geq 1;
\end{equation}  
\item
 The homogeneous
symmetric
functions in superspace are 
$h_\Lambda=\tilde{h}_{\Lambda_1}\cdots\tilde{h}_{\Lambda_m}h_{\Lambda_{m+1}}\cdots h_{\Lambda_{\ell}}\, ,$
\begin{equation}  \text{~~~where~~~}  \tilde{h}_k=\sum_{\Lambda \vdash (n|1)} (\Lambda_1+1)m_{\Lambda}
\qquad \text{and} \qquad h_r=\sum_{\Lambda \vdash (n|0)}m_{\Lambda}, 
 \text{~~~~for~~} k\geq 0,~r \geq 1
\end{equation}  
\end{itemize}
Observe that when $\Lambda=(\emptyset; \lambda)$, we have that $m_\Lambda=m_\lambda$,  $p_\Lambda=p_\lambda$,  $e_\Lambda=e_\lambda$ and
 $h_\Lambda=h_\lambda$ are respectively the usual monomial, power-sum, elementary and homogeneous symmetric functions.
Also note
that if we define the operator $d=\theta_1 {\partial}/{\partial_{x_1}}+\cdots + \theta_N \partial/\partial_{x_N}$, we have
\begin{equation} \label{opd}
(k+1) \, \tilde p_k = d ( p_{k+1})\, , \qquad  \tilde e_k = d (e_{k+1})  \quad \text{and} \quad \tilde h_k = d(h_{k+1})  
\end{equation}

\subsection{Scalar product}
The scalar product that we will consider generalizes naturally the usual Hall scalar product.  It is also best defined on power-sum symmetric functions.

Let $\langle \! \langle \, \cdot \, , \, \cdot \, \rangle \! \rangle : {\bf \Lambda} \times  {\bf \Lambda} \to \mathbb Z $
  be defined as
\begin{equation} \label{scalarprod}
\langle \! \langle \, p_\Lambda \, , \, p_{\Omega} \, \rangle \! \rangle = \delta_{\Lambda \Omega} \, z_{\Lambda^s}
\end{equation}
where \qquad  $z_{\Lambda^s} = 1^{n_{\Lambda^s}(1)} n_{\Lambda^s}(1)! 2^{n_{\Lambda^s}(2)} n_{\Lambda^s}(2)! \cdots$  \text{with} $n_{\Lambda^s}(i)$ the number of parts of ${\Lambda^s}$ equal to $i$.  The monomial and homogeneous symmetric functions are dual with respect to that scalar product, that is,
\begin{equation} \label{dualhm}
\langle \! \langle  h_\Lambda \, , \, m_\Omega  \rangle \! \rangle = \delta_{\Lambda \Omega}
\end{equation}

We also define the endomorphism $\omega$ as the unique homomorphism such
that
\begin{equation}
  \omega (p_r) = (-1)^{r-1} p_r\quad {\rm and} \qquad \omega (\tilde p_\ell) =
  (-1)^{\ell} \tilde p_\ell 
\end{equation}  
for $r=1,2,\dots$ and $\ell=0,1,2,\dots$.  The endomorphism $\omega$ is
then easily seen to be an involution as well as an isometry of the scalar product \eqref{scalarprod}, that is,
$\langle \! \langle \omega f \, , \, \omega g  \rangle \! \rangle=\langle \! \langle f \, , \, g \rangle \! \rangle$ for all $f,g \in {\bf \Lambda}$.
It is known moreover that
\begin{equation} \label{dualeh}
  \omega(e_\Lambda)= h_\Lambda
\end{equation}

\subsection{Schur functions in superspace}  There are two genuine families of Schur functions in superspace, denoted $s_\Lambda$ and $\bar s_\Lambda$, which can
be defined as generating sums of new types of tableaux. But because we will only need to use a few of their properties,
we will simply define them (even
though it is not very explicit) as special cases of Macdonald polynomials in superspace.

The Macdonald polynomials in superspace $\{P_{\Lambda}^{(q,t)}\}_{\Lambda}$ can be defined as the unique basis of the space
of symmetric functions in superspace such that
\begin{equation} \label{defjack}
  \begin{split}
& P_{\Lambda}^{(q,t)} = m_\Lambda + \text{smaller terms} \\
& \langle \langle P_{\Lambda}^{(q,t)}, P_{\Omega}^{(q,t)} \rangle \rangle_{q,t} = 0 \quad \text{ if } \Lambda \neq \Omega
  \end{split}
  \end{equation}
where the triangularity is with respect to the dominance ordering on superpartitions and where the scalar product $\langle \langle \cdot, \cdot \rangle \rangle_{q,t}$ is defined on power-sum symmetric functions as
\begin{equation}
  \langle \langle p_{\Lambda}, p_{\Omega} \rangle \rangle_{q,t} =
  \delta_{\Lambda \Omega} \, q^{|\Lambda^a|}  z_{\Lambda^s} \prod_{i=1}^{\ell(\Lambda^s)} \frac{1-q^{\Lambda^s_i}}{1-t^{\Lambda^s_i}}\, ,
  \qquad \qquad 
z_\lambda =\prod_{i\geq 1} i^{n_i(\lambda)} n_i(\lambda)!
\end{equation}  
with $m$ the fermionic degree of $\Lambda$ and $n_i(\lambda)$ the number of parts equal to $i$ in the partition $\lambda$.

Although the limiting cases $q=t=0$ and $q=t=\infty$ of the scalar product 
$\langle \langle \cdot, \cdot \rangle \rangle_{q,t}$ are degenerate and not well-defined respectively, the corresponding limiting cases
of the Macdonald polynomials in superspace exist and are combinatorially
very rich.  We thus
define the Schur functions in superspace $s_\Lambda$ and $\bar s_\Lambda$ as:
\begin{equation}\
s_\Lambda = P_\Lambda^{(0,0)} \qquad {\rm and} \qquad \bar s_\Lambda = P_\Lambda^{(\infty,\infty)} 
\end{equation}  

The following properties of the Schur functions in superspace can be found in
\cite{BDLM2,JL}.
\begin{proposition} \label{propdual}
Let $s_{\Lambda}^*$ and $\bar s_{\Lambda}^*$ be the bases dual to the bases  $s_{\Lambda}$ and $\bar s_{\Lambda}$ respectively, that is, let $s_{\Lambda}^*$ and $\bar s_{\Lambda}^*$ be such that
\begin{equation} 
\langle \! \langle s_{\Lambda}^*, s_{\Omega} \rangle \! \rangle =
\langle \! \langle \bar s_{\Lambda}^*, \bar s_{\Omega} \rangle \! \rangle =
 \delta_{\Lambda \Omega}
\end{equation}
Then
\begin{equation}
s_{\Lambda}^*= (-1)^{\binom{m}{2}} \omega \bar s_{\Lambda'} \qquad  {\rm and} \qquad
\bar s_{\Lambda}^*= (-1)^{\binom{m}{2}} \omega  s_{\Lambda'}
\end{equation}
where $m$ is the fermionic degree of $\Lambda$.
\end{proposition}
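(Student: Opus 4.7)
The plan is to prove the first identity $s_{\Lambda}^*= (-1)^{\binom{m}{2}} \omega \bar s_{\Lambda'}$; the second identity then follows from it by applying $\omega$ to both sides, using that $\omega^2 = \mathrm{id}$ and that $\omega$ is an isometry (so that the dual of $\omega f$ is $\omega$ applied to the dual), and relabeling $\Lambda \leftrightarrow \Lambda'$. To verify the first identity, I would check that, for every superpartition $\Omega$,
\begin{equation*}
\langle \! \langle (-1)^{\binom{m}{2}} \omega \bar s_{\Lambda'}, s_\Omega \rangle \! \rangle = \delta_{\Lambda \Omega}.
\end{equation*}
Using the isometry of $\omega$ with respect to $\langle \! \langle \cdot, \cdot \rangle \! \rangle$, the left-hand side becomes $(-1)^{\binom{m}{2}}\langle \! \langle \bar s_{\Lambda'}, \omega s_\Omega \rangle \! \rangle$, so the task reduces to describing the pairing $\langle \! \langle \bar s_{\Gamma}, \omega s_\Omega \rangle \! \rangle$ for arbitrary $\Gamma,\Omega$.

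The key ingredient would be a Cauchy-type identity for Schur functions in superspace, which I would extract by taking the limits $(q,t) \to (0,0)$ and $(q,t) \to (\infty,\infty)$ of the Macdonald Cauchy identity $\sum_\Lambda P_\Lambda^{(q,t)}(x,\theta) \otimes Q_\Lambda^{(q,t)}(y,\phi) = K^{(q,t)}$, where $Q_\Lambda^{(q,t)}$ is the rescaling of $P_\Lambda^{(q,t)}$ dual to itself with respect to $\langle\langle\cdot,\cdot\rangle\rangle_{q,t}$. The deformed involution $\omega_{q,t}$ on $\mathbf{\Lambda}$ sends $P_\Lambda^{(q,t)}$ to $Q_{\Lambda'}^{(t,q)}$ up to the fermionic sign $(-1)^{\binom{m}{2}}$ inherited from reordering anticommuting variables, and in the limits it specializes to the involution $\omega$ of the excerpt. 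Applying $\omega_{q,t}$ in one set of variables of the Cauchy kernel relates the two limits $(q,t)\to(0,0)$ and $(q,t)\to(\infty,\infty)$, and after passing to the limit one obtains
\begin{equation*}
\sum_\Lambda (-1)^{\binom{m(\Lambda)}{2}}\, \omega \bar s_{\Lambda'}(x,\theta) \otimes s_\Lambda(y,\phi) \; = \; K(x,\theta;\, y,\phi),
\end{equation*}
where $K$ is the reproducing kernel of $\langle \! \langle \cdot, \cdot \rangle \! \rangle$, namely $\sum_{\Lambda} p_\Lambda(x,\theta)\otimes p_\Lambda(y,\phi)/z_{\Lambda^s}$. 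Since such a kernel decomposes uniquely as $\sum_\Lambda u_\Lambda \otimes v_\Lambda$ with $\{u_\Lambda\}$ dual to $\{v_\Lambda\}$, this identifies $s_\Lambda^{*} = (-1)^{\binom{m}{2}} \omega \bar s_{\Lambda'}$.

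The main obstacle, in my view, is twofold: tracking the sign $(-1)^{\binom{m}{2}}$ coming from the antisymmetrization of $\theta$ and $\phi$ in the fermionic part of the Macdonald Cauchy kernel, and rigorously justifying the passage to the limits. The sign must be compatible with the triangular definition of $P_\Lambda^{(q,t)}$ and with the action of $\omega_{q,t}$ on the basis of power sums in superspace, which requires careful bookkeeping on dotted versus undotted parts. The well-definedness of the limits on $P_\Lambda^{(q,t)}$ (even though the scalar product $\langle\langle\cdot,\cdot\rangle\rangle_{q,t}$ degenerates) is asserted already in the excerpt and can be used as a black box from \cite{BDLM2,JL}; the delicate point is verifying that the Cauchy kernel, once appropriately normalized by $b_\Lambda^{(q,t)}$, survives these limits and yields the formula above without an extra factor.
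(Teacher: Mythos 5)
The paper does not actually prove Proposition~\ref{propdual}: it is imported verbatim from \cite{BDLM2,JL} (``The following properties \dots can be found in \cite{BDLM2,JL}''), so there is no in-paper argument to compare yours against. Your outline is, as far as I can tell, a faithful reconstruction of the route those references take: the $\omega_{q,t}$-duality of Macdonald superpolynomials together with the $(q,t)$-Cauchy kernel, specialized at $q=t$ and pushed to the limits $0$ and $\infty$. The reduction of the second identity to the first via the facts that $\omega$ is an involution and an isometry is correct and clean, and the verification scheme $\langle\!\langle (-1)^{\binom{m}{2}}\omega\bar s_{\Lambda'}, s_\Omega\rangle\!\rangle = \delta_{\Lambda\Omega}$ is the right target.

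That said, as a proof your text has a genuine gap: everything is carried by the assertion that $\omega_{q,t}P_\Lambda^{(q,t)}$ equals $(-1)^{\binom{m}{2}}Q_{\Lambda'}^{(t,q)}$ and that the normalized kernel survives the degenerate limits, and these are precisely the content of the proposition, not routine bookkeeping. One concrete point where the sketch glosses over the real difficulty: you say that applying $\omega_{q,t}$ to one tensor factor of the Cauchy kernel ``relates the two limits $(q,t)\to(0,0)$ and $(q,t)\to(\infty,\infty)$,'' but on the diagonal $q=t$ there is no $(q,t)\leftrightarrow(t,q)$ swap available, so it is not automatic that the $q=t\to 0$ limit of the dual factor produces $\bar s_{\Lambda'}$, which is a $q=t\to\infty$ object. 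The mechanism has to come from the interplay between the $q$-dependence of $\omega_{q,q}$ on the fermionic power sums $\tilde p_k$ and the factor $q^{|\Lambda^a|}$ in $\langle\langle p_\Lambda,p_\Lambda\rangle\rangle_{q,q}=q^{|\Lambda^a|}z_{\Lambda^s}$ (note this $\Lambda$-dependent factor also means the $(q,q)$-kernel is \emph{not} the Hall kernel $\sum_\Lambda p_\Lambda\otimes p_\Lambda/z_{\Lambda^s}$ until it is corrected), which in effect inverts $q$ and converts one limit into the other. Until that computation is exhibited, the sign $(-1)^{\binom{m}{2}}$ and the appearance of $\bar s_{\Lambda'}$ rather than $s_{\Lambda'}$ are unproven; if you do not want to carry it out, the honest course is to do what the paper does and cite \cite{BDLM2,JL} for the whole proposition rather than only for the limit existence.
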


The skew-Schur functions in superspace can be defined as in the non-supersymmetric case.   Let $ s_{\Lambda/\Omega}$ and $\bar s_{\Lambda/\Omega}$ be defined such that
\begin{equation}
\langle \! \langle s_{\Omega}^* \, f, s_{\Lambda} \rangle \! \rangle =
\langle \! \langle f , s_{\Lambda/\Omega} \rangle \! \rangle   \qquad {\rm and} \qquad
 \langle \! \langle   \bar s_{\Omega}^* \, f, \bar s_{\Lambda} \rangle \! \rangle =
\langle \! \langle f , \bar s_{\Lambda/\Omega} \rangle \! \rangle
\end{equation}
for all symmetric functions in superspace $f$.

We also define the analogs of  the Littlewood-Richardson coefficients
$\bar c^\Lambda_{\Gamma \Omega}$ and $c^\Lambda_{\Gamma \Omega}$ to be respectively such that 
\begin{equation}  \label{LR}
\bar s_{\Gamma}\,  \bar s_{\Omega} = \sum_{\Lambda} \bar c^\Lambda_{\Gamma \Omega}\,  \bar s_{\Lambda}
 \qquad {\rm and} \qquad    
s_{\Gamma}\,  s_{\Omega} = \sum_{\Lambda} c^\Lambda_{\Gamma \Omega}\,  s_{\Lambda}    
\end{equation}
Note that it is immediate from the (anti-)commutation relations between the Schur functions in superspace
 that if $\Gamma$ and $\Omega$ are respectively of fermionic degrees $a$ and $b$, then 
$\bar c^\Lambda_{\Gamma \Omega}=(-1)^{ab} \,  \bar c^\Lambda_{\Omega \Gamma}$ and 
 $c^\Lambda_{\Gamma \Omega}=(-1)^{ab}  \, c^\Lambda_{\Omega \Gamma}$.

The well-known connection between 
Littlewood-Richardson coefficients and skew Schur functions extends to
superspace.
\begin{proposition}  \label{propskew} We have
\begin{equation}
s_{\Lambda/\Omega} =  \sum_{\Gamma}  \bar c^{\Lambda'}_{\Gamma' \Omega'} \, s_\Gamma
\qquad {\rm and} \qquad  
\bar s_{\Lambda/\Omega} = \sum_{\Gamma}  c^{\Lambda}_{\Omega \Gamma} \, \bar s_\Gamma
\end{equation}
Furthermore, 
$c^\Lambda_{\Omega \Gamma}=c^{\Lambda'}_{\Gamma' \Omega'}$ (while we note that
$\bar c^\Lambda_{\Omega \Gamma}\neq \bar c^{\Lambda'}_{\Gamma' \Omega'}$ in general).
\end{proposition}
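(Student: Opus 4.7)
The plan is to extract the coefficients of $s_\Gamma$ in $s_{\Lambda/\Omega}$, and of $\bar s_\Gamma$ in $\bar s_{\Lambda/\Omega}$, via the defining property of the skew Schur functions and the scalar-product duality $\langle \! \langle s^*_\Gamma, s_\Lambda\rangle \! \rangle = \langle \! \langle \bar s^*_\Gamma, \bar s_\Lambda \rangle \! \rangle = \delta_{\Gamma\Lambda}$, then transport the computation through the involution $\omega$ of Proposition \ref{propdual} to reduce it to the Littlewood--Richardson coefficients introduced in \eqref{LR}.

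For the first identity, write $s_{\Lambda/\Omega} = \sum_\Gamma b_\Gamma s_\Gamma$. By duality $b_\Gamma = \langle \! \langle s^*_\Gamma, s_{\Lambda/\Omega}\rangle \! \rangle$, and the defining property of $s_{\Lambda/\Omega}$ applied with $f = s^*_\Gamma$ gives $b_\Gamma = \langle \! \langle s^*_\Omega s^*_\Gamma, s_\Lambda \rangle \! \rangle$. Let $a,c$ denote the fermionic degrees of $\Omega,\Gamma$. Substituting $s^*_\Omega = (-1)^{\binom{a}{2}} \omega \bar s_{\Omega'}$ and $s^*_\Gamma = (-1)^{\binom{c}{2}} \omega \bar s_{\Gamma'}$ and using that $\omega$ is an algebra homomorphism, one obtains
\begin{equation*}
s^*_\Omega s^*_\Gamma \;=\; (-1)^{\binom{a}{2} + \binom{c}{2}}\, \omega \bigl(\bar s_{\Omega'} \bar s_{\Gamma'}\bigr) \;=\; (-1)^{\binom{a}{2} + \binom{c}{2} + \binom{a+c}{2}} \sum_{\Pi} \bar c^{\Pi}_{\Omega' \Gamma'}\, s^*_{\Pi'},
\end{equation*}
after expanding $\bar s_{\Omega'} \bar s_{\Gamma'}$ via \eqref{LR} (whose summands have fermionic degree $a+c$) and using $\omega \bar s_\Pi = (-1)^{\binom{a+c}{2}} s^*_{\Pi'}$ from Proposition \ref{propdual}. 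The congruence $\binom{a+c}{2} \equiv \binom{a}{2} + \binom{c}{2} + ac \pmod{2}$ collapses the prefactor to $(-1)^{ac}$. Pairing with $s_\Lambda$ then yields $b_\Gamma = (-1)^{ac}\, \bar c^{\Lambda'}_{\Omega' \Gamma'}$, and the anti-commutation relation $\bar c^{\Lambda'}_{\Omega' \Gamma'} = (-1)^{ac}\, \bar c^{\Lambda'}_{\Gamma' \Omega'}$ recorded in the excerpt cancels the sign, giving the first identity.

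An identical computation, now using $\bar s^*_\Omega = (-1)^{\binom{a}{2}} \omega s_{\Omega'}$ and the expansion $s_{\Omega'} s_{\Gamma'} = \sum_\Pi c^\Pi_{\Omega'\Gamma'} s_\Pi$, produces $\bar s_{\Lambda/\Omega} = \sum_\Gamma c^{\Lambda'}_{\Gamma' \Omega'} \bar s_\Gamma$. Comparing with the stated formula reduces the second identity to the conjugation symmetry $c^\Lambda_{\Omega \Gamma} = c^{\Lambda'}_{\Gamma' \Omega'}$, which is precisely the ``furthermore'' claim. This is the main obstacle: the $\omega$-machinery above cannot produce it, since $\omega s_\Lambda$ is a scalar multiple of $\bar s^*_{\Lambda'}$ rather than of $\pm s_{\Lambda'}$, and the analogous symmetry is noted to fail for the $\bar c$ coefficients. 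I would prove $c^\Lambda_{\Omega\Gamma} = c^{\Lambda'}_{\Gamma'\Omega'}$ by appealing to the combinatorial description of $s_\Lambda$ via super-tableaux from \cite{BDLM2,JL}, where conjugation of tableaux supplies a bijection between the objects counted by the two coefficients and thus extends the classical Littlewood--Richardson conjugation identity to the $q=t=0$ Macdonald specialization that defines $s_\Lambda$.
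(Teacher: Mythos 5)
The paper never actually proves Proposition~\ref{propskew}: it is presented as a property of the Schur functions in superspace imported from \cite{BDLM2,JL}, so there is no internal argument to measure yours against. That said, your derivation of the first expansion is correct and self-contained given what the paper does assume: extracting the coefficient as $b_\Gamma=\langle \! \langle s^*_\Omega s^*_\Gamma, s_\Lambda \rangle \! \rangle$, transporting the product through $\omega$ via Proposition~\ref{propdual}, using $\binom{a}{2}+\binom{c}{2}+ac=\binom{a+c}{2}$, and cancelling the residual $(-1)^{ac}$ against the anticommutation of the $\bar c$'s is precisely the manipulation the paper itself performs in \eqref{eqprodst}. Your reduction of the second expansion to the identity $c^{\Lambda}_{\Omega\Gamma}=c^{\Lambda'}_{\Gamma'\Omega'}$ is also correct, and you are right that the $\omega$-machinery cannot produce that identity: $\omega$ exchanges the $s$ and $\bar s^{\,*}$ families rather than sending $s_\Lambda$ to $\pm s_{\Lambda'}$, so the classical argument for $c^{\lambda}_{\mu\nu}=c^{\lambda'}_{\mu'\nu'}$ has no superspace analogue of this form.

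The genuine gap is the ``furthermore'' claim, which in your setup carries the entire content of the second identity, and your proposed route to it is not convincing. The tableau descriptions of $s_\Lambda$ and $\bar s_\Lambda$ in \cite{BDLM2,JL} are not manifestly covariant under conjugation of diagrams, and the proposition itself warns that the analogous symmetry fails for $\bar c^{\Lambda}_{\Omega\Gamma}$; since both families admit tableau descriptions, any argument of the form ``conjugate the tableaux'' must explain why it succeeds at $q=t=0$ but fails at $q=t=\infty$, which a naive conjugation bijection would not do. As written, you have established the first identity and the equivalence of the second identity with the conjugation symmetry, but not the symmetry itself; to close the argument you would need either to cite that symmetry explicitly from the literature on $s_\Lambda$ (as the paper implicitly does) or to construct the required bijection and verify it is specific to the $s$ family, neither of which is done here.
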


\section{The Hopf algebra of symmetric functions in superspace}

In this section we show that the ring of symmetric functions in superspace
${\bf \Lambda}$ has a Hopf algebra structure which extends naturally that
of the usual symmetric functions (see for instance \cite{LMW} and
\cite{GR}).

\subsection{Hopf algebra structure of ${\bf \Lambda}$}
As mentioned before, the ring ${\bf \Lambda}$ has a natural grading,
called the fermionic degree,
which counts the degree in the anticommuting variables of the functions.
It is easy to check that
\begin{equation}
f g = (-1)^{a b} gf
\end{equation}  
if $f$ and $g$ have fermionic degrees $a$ and $b$ respectively.

Extending what is usually done in the symmetric function case \cite{M}, we will identify
${\bf \Lambda} \otimes_{\mathbb Q} {\bf \Lambda}$ (which from now on we will denote ${\bf \Lambda} \otimes {\bf \Lambda}$ for simplicity) with symmetric functions of two sets of variables
$(x_1,x_2,\dots;\theta_1,\theta_2,\dots)$ and $(y_1,y_2,\dots;\phi_1,\phi_2,\dots)$,
with
the extra requirement that the variables $\theta$ and $\phi$ anticommute,
that is, $\theta_i \phi_j=-\phi_j \theta_i$.  This way,
$f \otimes g$ corresponds to $f(x;\theta) g(y;\phi)$ and
${\bf \Lambda} \otimes {\bf \Lambda}$ becomes an algebra with a product satisfying the relation
\begin{equation}
  (f_1\otimes g_1) \cdot  (f_2\otimes g_2)=
   (-1)^{ab}  f_1 f_2 \otimes g_1 g_2
    \end{equation}
  for $f_1,f_2,g_1,g_2 \in {\bf \Lambda}$ with $g_1$ and $f_2$ of fermionic degree $a$ and $b$ respectively.

 The comultiplication
$\Delta: {\bf \Lambda} \to {\bf \Lambda} \otimes {\bf \Lambda} $ is defined as
\begin{equation}
(\Delta f)(x,y;\theta,\phi) = f(x,y;\theta,\phi)
\end{equation}  
where as we just mentioned, $f(x,y;\theta,\phi)$ is considered to be an element
of ${\bf \Lambda} \otimes {\bf \Lambda} $. 
With this definition, the coproduct is immediately coassociative
\begin{equation}
  (\Delta \otimes {\rm id}) \circ \Delta f= f(x,y,z;\theta,\phi,\varphi)=
   ({\rm id} \otimes \Delta) \circ \Delta f
\end{equation}
and an algebra morphism
\begin{equation} \label{dalgmor}
  \Delta(fg)= \Delta(f) \cdot \Delta(g) \qquad {\rm and} \qquad
  \Delta\bigl(u(1)\bigr)=\Delta(1)=1 \otimes 1
\end{equation}
since $(fg)(x,y;\theta,\phi)=f(x,y;\theta,\phi) g(x,y;\theta,\phi)$
for all $f,g \in {\bf \Lambda}$.

It is straightforward to check that the $p_i$'s and $\tilde p_i$'s are primitive elements, that is,  
\begin{equation} \label{eqcoprodp}
  \Delta p_i = p_i \otimes 1 + 1 \otimes p_i \quad {\rm and} \quad
   \Delta \tilde p_i = \tilde p_i \otimes 1 + 1 \otimes \tilde p_i 
\end{equation}
The coproduct of the elementary and homogeneous symmetric functions also has a simple expression.
\begin{proposition} We have
\begin{equation} \label{coprode}
  \Delta e_i = \sum_{k+\ell=i} e_k \otimes e_\ell 
  \quad {\rm and} \quad
  \Delta \tilde e_i = \sum_{k+\ell=i} 
 \left( \tilde e_k \otimes e_\ell + e_\ell \otimes \tilde e_k \right)
\end{equation}  
and
\begin{equation} \label{eqcoprodh}
  \Delta h_i = \sum_{k+\ell=i} h_k \otimes h_\ell 
  \quad {\rm and} \quad
  \Delta \tilde h_i = \sum_{k+\ell=i} 
 \left( \tilde h_k \otimes h_\ell + h_\ell \otimes \tilde h_k \right)
\end{equation}  
\end{proposition}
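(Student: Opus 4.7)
The plan is to combine two standard ideas: compute the coproduct on $e_i$ and $h_i$ using generating series that factor over the two alphabets, and then lift these formulas to $\tilde e_i$ and $\tilde h_i$ using the fermionic differential operator $d$ from \eqref{opd}.

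First, I would handle the bosonic cases. By definition, $\Delta e_i$ is obtained by expanding $e_i(x,y;\theta,\phi)$ on the combined alphabets, but $e_i$ does not involve the anticommuting variables, so we just need the classical identity $\prod_a (1+x_a t)\prod_b(1+y_b t) = E_x(t)E_y(t)$, whose coefficient of $t^i$ gives $e_i(x,y) = \sum_{k+\ell=i} e_k(x) e_\ell(y)$; under the identification $f(x;\theta) g(y;\phi) \leftrightarrow f \otimes g$ this is exactly $\sum_{k+\ell=i} e_k \otimes e_\ell$. The case of $h_i$ is identical using $H(t) = \prod_a (1-x_a t)^{-1}$.

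Next, I would treat the fermionic cases by applying $d = \sum_i \theta_i \partial/\partial x_i$ and using the relations $\tilde e_k = d(e_{k+1})$ and $\tilde h_k = d(h_{k+1})$ from \eqref{opd}. The key observation is that on the combined alphabet the operator becomes $d_{x,\theta} + d_{y,\phi}$, so
\[
\Delta(\tilde e_i) = d\bigl(\Delta e_{i+1}\bigr) = (d_{x,\theta} + d_{y,\phi}) \sum_{k+\ell = i+1} e_k(x)\, e_\ell(y).
\]
Since $e_k(x)$ and $e_\ell(y)$ are even (no anticommuting variables), the ordinary Leibniz rule applies and each term splits as $\tilde e_{k-1}(x;\theta) e_\ell(y) + e_k(x) \tilde e_{\ell-1}(y;\phi)$. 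Reindexing the two summations separately (and relabeling $k \leftrightarrow \ell$ in the second) gives the stated formula $\sum_{k+\ell = i}(\tilde e_k \otimes e_\ell + e_\ell \otimes \tilde e_k)$. The argument for $\Delta \tilde h_i$ is identical after replacing every $e$ by $h$.

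The only subtle point is justifying $\Delta \circ d = d \circ \Delta$ on the combined alphabet, i.e.\ that differentiating first and then plugging in $(x,y;\theta,\phi)$ yields the same thing as plugging in first and then differentiating with respect to all the variables. This is immediate from the definition of $\Delta$ as substitution of the union alphabet together with the chain rule (since $x$ and $y$ are independent commuting variables, as are $\theta$ and $\phi$ at this level), so there is no genuine obstacle — the formulas follow at once once this identification is in place.
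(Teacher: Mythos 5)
Your proposal is correct and follows essentially the same route as the paper: the bosonic formulas are the classical two-alphabet expansions, and the fermionic ones are obtained by applying $d$ via $\tilde e_k = d(e_{k+1})$ and $\tilde h_k = d(h_{k+1})$. The only (minor) difference is that the paper establishes the intertwining $\Delta\circ d = (d\otimes 1 + 1\otimes d)\circ\Delta$ as a general operator identity on all of ${\bf \Lambda}$ by induction on the power-sum basis, carefully tracking the signs of the odd derivation, whereas you only need it on the purely bosonic elements $e_{i+1}$ and $h_{i+1}$, where your direct union-alphabet and Leibniz-rule argument suffices and no signs arise.
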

\begin{proof}  The formulas for $ \Delta e_i$ and $ \Delta h_i$ are well
  known \cite{M}.   We will use the operator $d$ that appears in \eqref{opd}
  to derive the formulas for $ \Delta \tilde e_i$ and $ \Delta \tilde h_i$.
  On symmetric functions in superspace, the operator $d$ can be defined as the
  unique linear operator such that
  \begin{equation}
   d(p_{k+1})=(k+1)\, \tilde  p_k\, , \qquad d(\tilde p_k)=0   \qquad k=0,1,2,\dots
  \end{equation}
  and such that
  \begin{equation}
   d(fg) = d(f) g +(-1)^a fd(g)
  \end{equation}  
  whenever $f$ is of fermionic degree $a$.  We will now see that
  \begin{equation} \label{reld}
  \Delta \circ d = (d \otimes 1 + 1 \otimes d) \circ \Delta
  \end{equation}  
  The relation can be checked to hold when it acts on $p_i$ or $\tilde p_i$ by \eqref{eqcoprodp}. Hence
  \begin{equation} \label{reldd}
 \Delta \circ d (p_\Lambda) = (d \otimes 1 + 1 \otimes d) \circ \Delta (p_\Lambda)
\end{equation}
  for any $\Lambda$ of length 1.
  Supposing by induction that \eqref{reldd} holds for $p_\Lambda$ with
  $\Lambda$ of
  length $n-1$, we have
  \begin{align}
    \Delta \circ d(p_\Lambda p_k) & = \Delta \bigl( d(p_\Lambda) p_k +(-1)^m p_\Lambda d(p_k) \bigr) \nonumber \\
    & = \Delta \bigl(d(p_\Lambda) \bigr) \cdot \Delta( p_k)  +(-1)^m \Delta(p_\Lambda) \cdot \Delta\bigl( d(p_k) \bigr)  \nonumber \\
    & = \bigl( (d \otimes 1 + 1 \otimes d) \circ \Delta(p_\Lambda) \bigr)\cdot \Delta( p_k)  +(-1)^m \Delta(p_\Lambda)\cdot  (d \otimes 1 + 1 \otimes d) \circ \Delta (p_k)  \nonumber \\
    & =   (d \otimes 1 + 1 \otimes d) \circ \Delta (p_\Lambda p_k)
  \end{align}
  where we assumed without loss of generality that $\Lambda$ is of fermionic
  degree $m$.
  Similarly, since $d(\tilde p_k)=0$, we have again by induction that
   \begin{align}
    \Delta \circ d(p_\Lambda \tilde p_k) & = \Delta \bigl( d(p_\Lambda) \tilde p_k  \bigr) \nonumber \\
    & = \Delta \bigl(d(p_\Lambda) \bigr) \cdot \Delta( \tilde p_k)     \nonumber \\
    & = \bigl( (d \otimes 1 + 1 \otimes d) \circ \Delta(p_\Lambda) \bigr)\cdot \Delta( \tilde p_k)    \nonumber \\
    & =   (d \otimes 1 + 1 \otimes d) \circ \Delta (p_\Lambda \tilde p_k)
    \end{align}
Hence \eqref{reldd} holds for any superpartition $\Lambda$ of length $n$, which proves \eqref{reld} by induction.

Using \eqref{opd}, we thus have
\begin{align}
  \Delta \tilde e_i = \Delta \circ d(e_{i+1})=  (d \otimes 1 + 1 \otimes d) \circ \Delta (e_{i+1}) & =  (d \otimes 1 + 1 \otimes d) \sum_{k+\ell=i+1} e_k \otimes e_\ell \nonumber \\
  & = \sum_{k+\ell=i+1} (\tilde e_{k-1} \otimes e_\ell + e_{k} \otimes \tilde e_{\ell-1})  
  \end{align}  
which proves the formula for $\Delta \tilde e_i$.  The formula for
$\Delta \tilde h_i$ can be deduced from that of $\Delta h_{i+1}$ in exactly the same way.
\end{proof}

We now prove that ${\bf \Lambda}$ is a Hopf algebra.  Define
the counit $\epsilon: {\bf \Lambda} \to \mathbb Q$ to be
the identity on ${\bf \Lambda}^{0}$ and  the null operator on
${\bf \Lambda}^{n}$ for $n > 0$, where we use the grading defined
in \eqref{newgrad}.  Since the counit is easily seen to be an algebra morphism,
we have that  ${\bf \Lambda}$ is a bialgebra by \eqref{dalgmor}.
Furthermore, ${\bf \Lambda}$  is a graded bialgebra since
$\Lambda^\ell \Lambda^n \subseteq \Lambda^{\ell+n}$ and
$\Delta(\Lambda^n) \subseteq \bigoplus_{k+\ell=n} \Lambda^k \otimes \Lambda^\ell$,
the latter property being a consequence of \eqref{eqcoprodp}.  
Moreover, ${\bf \Lambda}$ is connected given that ${\bf \Lambda}^0=\mathbb Q$.
Therefore, from Theorem~\ref{hopftheo}, ${\bf \Lambda}$ is automatically
a Hopf algebra (the antipode will
be described explicitly later).

In order to obtain the coproduct of the Schur functions in superspace,
we now prove a proposition expressing how
$s_\Lambda(x,y;\theta,\phi)$ splits into Schur functions in superspace of each alphabet.
It relies on the use of the following Cauchy-type identities
\begin{equation} \label{cauchy}
\prod_{i,j} \frac{1}{(1-x_i y_j -\theta_i \phi_j)}= \sum_{\Lambda} s_\Lambda(x;\theta) s^*_\Lambda(y;\phi)= \sum_{\Lambda} \bar s_\Lambda(x;\theta) \bar s^*_\Lambda(y;\phi)
\end{equation}  
which are consequences of the duality in Proposition~\ref{propdual} (see \cite{BDLM2}).
\begin{proposition} We have
  \begin{equation}
     s_\Lambda(x,y;\theta,\phi)=  \sum_\Omega s_{\Lambda/\Omega}(x;\theta) s_{\Omega}(y;\phi) \qquad {\rm and} \qquad  \bar s_\Lambda(x,y;\theta,\phi)=  \sum_\Omega \bar s_{\Lambda/\Omega}(x;\theta) \bar s_{\Omega}(y;\phi) 
   \end{equation} 
 \end{proposition}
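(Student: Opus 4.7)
The plan is to mimic the classical Cauchy-based derivation, using an auxiliary alphabet $(z;\psi)$ as a coefficient extractor. Applying the first Cauchy identity of \eqref{cauchy} to the combined alphabet $(x,y;\theta,\phi)$ paired with $(z;\psi)$ gives
\begin{equation}
\sum_\Lambda s_\Lambda(x,y;\theta,\phi)\, s^*_\Lambda(z;\psi) \,=\, \prod_{i,j}\frac{1}{1-x_iz_j-\theta_i\psi_j}\,\prod_{i,j}\frac{1}{1-y_iz_j-\phi_i\psi_j},
\end{equation}
and expanding each of the two factors on the right via \eqref{cauchy} rewrites this product as $\bigl(\sum_\Gamma s_\Gamma(x;\theta) s^*_\Gamma(z;\psi)\bigr)\bigl(\sum_\Omega s_\Omega(y;\phi) s^*_\Omega(z;\psi)\bigr)$.

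The next step is to bring the right-hand side into the shape $\sum_\Lambda(\,\cdot\,) s^*_\Lambda(z;\psi)$ so that linear independence of $\{s^*_\Lambda(z;\psi)\}_\Lambda$ can be invoked. Writing $a,b$ for the fermionic degrees of $\Gamma,\Omega$, moving $s^*_\Gamma(z;\psi)$ past $s_\Omega(y;\phi)$ produces a sign $(-1)^{ab}$, since $\psi$ and $\phi$ are disjoint alphabets of anticommuting variables. To re-expand $s^*_\Gamma s^*_\Omega$ in the dual basis I apply the definition of the skew Schur functions twice, obtaining
\begin{equation}
\langle \! \langle s^*_\Gamma\, s^*_\Omega,\, s_\Lambda \rangle \! \rangle \,=\, \langle \! \langle s^*_\Omega,\, s_{\Lambda/\Gamma} \rangle \! \rangle \,=\, \bar c^{\Lambda'}_{\Omega'\Gamma'},
\end{equation}
where the last equality is Proposition~\ref{propskew}. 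Comparing coefficients of $s^*_\Lambda(z;\psi)$ on both sides then yields
\begin{equation}
s_\Lambda(x,y;\theta,\phi) \,=\, \sum_{\Gamma,\Omega}(-1)^{ab}\,\bar c^{\Lambda'}_{\Omega'\Gamma'}\, s_\Gamma(x;\theta)\, s_\Omega(y;\phi).
\end{equation}

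To recognize this as $\sum_\Omega s_{\Lambda/\Omega}(x;\theta) s_\Omega(y;\phi)$, I expand $s_{\Lambda/\Omega} = \sum_\Gamma \bar c^{\Lambda'}_{\Gamma'\Omega'}\, s_\Gamma$ via Proposition~\ref{propskew}, which reduces the claim to the single identity $(-1)^{ab}\, \bar c^{\Lambda'}_{\Omega'\Gamma'} = \bar c^{\Lambda'}_{\Gamma'\Omega'}$. This is precisely the signed symmetry of the super LR coefficients recorded just after \eqref{LR}, applied at the conjugate superpartitions, together with the fact that the fermionic degree is invariant under conjugation so that $\Gamma',\Omega'$ still have fermionic degrees $a,b$. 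The $\bar s$-version goes through identically, starting from the second Cauchy identity of \eqref{cauchy} and the second formula of Proposition~\ref{propskew}; the relevant LR coefficients $c^\Lambda_{\Omega\Gamma}$ then appear without conjugation and their signed symmetry supplies the matching sign directly.

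The main obstacle is sign bookkeeping: the sign $(-1)^{ab}$ produced when reordering the two dual-Schur factors across the disjoint anticommuting alphabets $\psi$ and $\phi$ must exactly cancel the signed anti-commutation of the super LR coefficients. That these two signs conspire to match is the essential super-combinatorial verification that allows the classical Cauchy argument to extend to superspace.
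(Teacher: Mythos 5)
Your argument is correct and follows essentially the same route as the paper: expand the Cauchy kernel for the combined alphabet in two ways, reorder the anticommuting factors at the cost of $(-1)^{ab}$, re-expand the product of dual Schur functions via the Littlewood--Richardson coefficients, and finish with Proposition~\ref{propskew}. The only (harmless) difference is that you obtain the expansion $s^*_\Gamma s^*_\Omega=\sum_\Lambda \bar c^{\Lambda'}_{\Omega'\Gamma'}s^*_\Lambda$ directly from the adjointness definition of the skew Schur functions, whereas the paper derives the equivalent identity \eqref{eqprodst} by applying $\omega$ to \eqref{LR} and tracking the binomial signs.
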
  
\begin{proof}  We will prove only the first formula since the other one can be proved in exactly the same way.
  Using \eqref{cauchy}, we have, on the one hand
  \begin{equation} \label{eqh1}
    \prod_{i,j} \frac{1}{(1-x_i z_j -\theta_i \varphi_j)}
    \prod_{k,\ell} \frac{1}{(1-y_k z_\ell -\phi_k \varphi_\ell)}= \sum_{\Lambda} s_\Lambda(x,y;\theta,\phi) s^*_\Lambda(z;\varphi)
      \end{equation}  
  and, on the other hand,
 \begin{equation} \label{eqh2}
    \prod_{i,j} \frac{1}{(1-x_i z_j -\theta_i \varphi_j)}
    \prod_{k,\ell} \frac{1}{(1-y_k z_\ell -\phi_k \varphi_\ell)}= \sum_{\Omega,\Gamma} s_\Omega(x;\theta) s^*_\Omega(z;\varphi)  s_\Gamma(y;\phi) s^*_\Gamma(z;\varphi) 
      \end{equation}  
 Now, applying the endomorphism $\omega$ on the first equation of \eqref{LR} gives
 \begin{equation} \label{eqprodst}
   (-1)^{\binom{a}{2}+\binom{b}{2}}  s^*_{\Gamma'} s^*_{\Omega'} = \sum_{\Lambda}  (-1)^{\binom{c}{2}} \bar c_{\Gamma \Omega}^{\Lambda} s^*_{\Lambda'} \iff
 (-1)^{ab}  s^*_{\Gamma} s^*_{\Omega} = \sum_{\Lambda}  \bar c_{\Gamma' \Omega'}^{\Lambda'} s^*_{\Lambda}
    \end{equation}  
 where $a,b$ and $c$ are the fermionic degrees of $\Gamma, \Omega$ and $\Lambda$ respectively.  In the equivalence, we used the fact that
 \begin{equation}
\binom{a}{2}+\binom{b}{2}+ab=\binom{a+b}{2}=\binom{c}{2}
    \end{equation}  
since $c=a+b$ (otherwise $ \bar c_{\Omega' \Gamma'}^{\Lambda'}=0$).
 
  From \eqref{eqh1} and \eqref{eqh2}, we thus get
 \begin{equation} \label{eqh3}
   \sum_{\Lambda} s_\Lambda(x,y;\theta,\phi) s^*_\Lambda(z;\varphi)=
   \sum_{\Omega,\Gamma,\Lambda} \bar c_{\Omega' \Gamma'}^{\Lambda'}
   s_\Omega(x;\theta) s_\Gamma(y;\phi) s^*_\Lambda(z;\varphi) 
    \end{equation}  
 where we used the relation
 \begin{equation}
 s^*_\Omega(z;\varphi)  s_\Gamma(y;\phi)=(-1)^{ab} s_\Gamma(y;\phi) s^*_\Omega(z;\varphi) 
 \end{equation}  

Using Proposition~\ref{propskew} in \eqref{eqh3} then gives
\begin{equation} 
   \sum_{\Lambda} s_\Lambda(x,y;\theta,\phi) s^*_\Lambda(z;\varphi)=
   \sum_{\Gamma,\Lambda} 
   s_{\Lambda/\Gamma}(x;\theta) s_\Gamma(y;\phi) s^*_\Lambda(z;\varphi) 
    \end{equation} 
which proves the proposition.
\end{proof}
The coproducts of $s_\Lambda$ and $\bar s_\Lambda$ can now be given explicitly.
\begin{corollary} \label{corocoschur}
  We have
  \begin{equation}
    \Delta s_\Lambda=  \sum_\Omega s_{\Lambda/\Omega} \otimes s_{\Omega}
    \qquad {\rm and} \qquad  \Delta \bar s_\Lambda=  \sum_\Omega \bar s_{\Lambda/\Omega} \otimes \bar s_{\Omega}
  \end{equation}
Or, equivalently by Proposition~\ref{propskew},
\begin{equation}
\Delta s_\Lambda= \sum_{\Omega,\Gamma}\bar c^{\Lambda'}_{\Gamma' \Omega'} \, s_\Gamma  \otimes s_{\Omega}  \qquad {\rm and} \qquad \Delta \bar s_\Lambda= \sum_{\Omega,\Gamma} c^{\Lambda}_{ \Omega \Gamma} \, \bar s_\Gamma  \otimes \bar s_{\Omega}  
\end{equation}
\end{corollary}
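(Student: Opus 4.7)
The plan is to derive the corollary essentially as an immediate consequence of the preceding proposition, since the coproduct $\Delta$ was \emph{defined} by the rule $(\Delta f)(x,y;\theta,\phi) = f(x,y;\theta,\phi)$ under the identification of ${\bf \Lambda} \otimes {\bf \Lambda}$ with symmetric functions in the two alphabets $(x;\theta)$ and $(y;\phi)$, where $f(x;\theta)g(y;\phi)$ corresponds to $f \otimes g$.

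Concretely, I would first apply the preceding proposition to obtain
\begin{equation*}
s_\Lambda(x,y;\theta,\phi) = \sum_\Omega s_{\Lambda/\Omega}(x;\theta)\, s_\Omega(y;\phi),
\end{equation*}
and then translate back through the identification to read off $\Delta s_\Lambda = \sum_\Omega s_{\Lambda/\Omega} \otimes s_\Omega$. The argument for $\bar s_\Lambda$ is literally identical, using the second formula of the preceding proposition. For the equivalent form, I would then substitute the expansions from Proposition~\ref{propskew}, namely $s_{\Lambda/\Omega} = \sum_\Gamma \bar c^{\Lambda'}_{\Gamma' \Omega'}\, s_\Gamma$ and $\bar s_{\Lambda/\Omega} = \sum_\Gamma c^{\Lambda}_{\Omega \Gamma}\, \bar s_\Gamma$, and collect terms to obtain the stated double sums involving the Littlewood--Richardson coefficients.

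There is essentially no technical obstacle here, since all of the work has already been done in establishing the splitting formula in the preceding proposition (which itself rests on the Cauchy identities \eqref{cauchy} and on the sign manipulation \eqref{eqprodst}). The only point requiring a small sanity check is that no sign arises in passing from the product $s_{\Lambda/\Omega}(x;\theta)\, s_\Omega(y;\phi)$ to the tensor $s_{\Lambda/\Omega} \otimes s_\Omega$: this is immediate because the factors already appear in the canonical order (first alphabet on the left, second alphabet on the right), so the anticommutation relations between the $\theta$'s and $\phi$'s are never invoked. Thus the corollary follows at once, and the whole proof is about two lines.
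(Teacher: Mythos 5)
Your proposal is correct and follows exactly the route the paper intends: the corollary is read off directly from the preceding splitting proposition via the definition $(\Delta f)(x,y;\theta,\phi)=f(x,y;\theta,\phi)$, with the second form obtained by substituting the expansions from Proposition~\ref{propskew}. Your remark that no sign intervenes because the factors are already in canonical alphabet order is a correct and worthwhile sanity check.
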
  
Given that $c^{\Lambda'}_{\Gamma' \Omega'}= (-1)^{ab}c^{\Lambda'}_{\Omega' \Gamma'}$
if $a$ and $b$ are the fermionic degrees of ${\Gamma}$ and ${\Omega}$
respectively, the previous corollary immediately implies that
\begin{equation} \label{eqtwist}
\tau \circ \Delta f = \Delta f 
\end{equation}  
where the twist map $\tau: \mathcal H \otimes \mathcal H \to \mathcal H \otimes \mathcal H $ is such that $\tau (g \otimes h)=(-1)^{ab} (h \otimes g)$ if $g$ and $h$ are respectively of fermionic degrees $a$ and $b$ (the topologist's twist map introduced in Section~\ref{SecHopf}).
Hence $\Lambda$ is a {\defstyle cocommutative} Hopf
algebra (in the topologist's sense due to the extra signs).

\subsection{The antipode and the involution $\omega$} The Hopf algebra ${\bf \Lambda}$ has a unique antipode $S: {\bf \Lambda} \to {\bf \Lambda}$ which is such that $S(a)=-a$ if $a$ is a primitive element by  \eqref{eqantipo}.  Since  power-sums,
$p_i,\tilde p_j$ for $i \geq 1$ and $j \geq 0$ are primitive generators of ${\bf \Lambda}$, $S$ can be defined by its action on the powers-sums:
\begin{equation} \label{eqantipode}
S(p_i)=-p_i, \quad S(\tilde p_j)=- \tilde p_j \implies
  S(p_\Lambda) = (-1)^{\ell(\Lambda)} p_\Lambda
\end{equation}
since $S(\tilde p_{\Lambda_1}\cdots \tilde p_{\Lambda_m})=(-1)^{\binom{m}{2}} S(\tilde p_{\Lambda_m})\cdots S(\tilde p_{\Lambda_1})=(-1)^{m+\binom{m}{2}} \tilde p_{\Lambda_m}\cdots \tilde p_{\Lambda_1}=(-1)^{m} \tilde p_{\Lambda_1}\cdots \tilde p_{\Lambda_m}$.  Note that the sign in the first equality 
stems from the fact that $S$ is a signed anti-homomorphism,
that is, that it satisfies the relation  $S(fg)=(-1)^{ab}S(g)S(f)$ for $f,g \in {\bf \Lambda}$ of fermionic degrees $a$ and $b$ respectively.

The antipode $S$ connects with the involution $\omega$ in the following way.
\begin{proposition} \label{propanti} We have that
  \begin{equation}
     S(f) = (-1)^{m+n} \omega(f)
  \end{equation}  
if $f \in {\bf \Lambda}_{n,m}$.
\end{proposition}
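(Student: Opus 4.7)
The plan is to verify the identity on the power-sum basis $\{p_\Lambda\}_{\Lambda \in \SPar}$ of ${\bf \Lambda}$. Since both $S$ and $\omega$ are $\mathbb Q$-linear maps, and since $p_\Lambda \in {\bf \Lambda}^{n,m}$ whenever $\Lambda \in \SParnm$, equality on each power-sum basis element is equivalent to the claim. The check is essentially a careful bookkeeping of signs.

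First I would compute $S(p_\Lambda)$: by \eqref{eqantipode}, we already have
\begin{equation*}
S(p_\Lambda) = (-1)^{\ell(\Lambda)}\, p_\Lambda.
\end{equation*}
Next, using that $\omega$ is an algebra homomorphism together with $\omega(p_r)=(-1)^{r-1} p_r$ and $\omega(\tilde p_k)=(-1)^k \tilde p_k$, I would multiply out the contributions from the anticommuting part $\Lambda^a=(\Lambda_1,\dots,\Lambda_m)$ and the commuting part $\Lambda^s=(\Lambda_{m+1},\dots,\Lambda_\ell)$:
\begin{equation*}
\omega(p_\Lambda) = \Bigl(\prod_{i=1}^{m}(-1)^{\Lambda_i}\Bigr)\Bigl(\prod_{j=m+1}^{\ell}(-1)^{\Lambda_j-1}\Bigr) p_\Lambda = (-1)^{|\Lambda^a|+|\Lambda^s|-\ell(\Lambda^s)}\, p_\Lambda = (-1)^{n-\ell(\Lambda^s)}\, p_\Lambda.
\end{equation*}

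Finally I would reconcile the two signs using $\ell(\Lambda)=m+\ell(\Lambda^s)$ and $(-1)^{-\ell(\Lambda)}=(-1)^{\ell(\Lambda)}$:
\begin{equation*}
(-1)^{n-\ell(\Lambda^s)} = (-1)^{n+m-\ell(\Lambda)} = (-1)^{n+m}(-1)^{\ell(\Lambda)},
\end{equation*}
which gives $\omega(p_\Lambda) = (-1)^{n+m} S(p_\Lambda)$, i.e., $S(p_\Lambda)=(-1)^{n+m}\omega(p_\Lambda)$, as required.

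There is no real obstacle: both $S$ and $\omega$ are already known to be well-defined linear maps, so no compatibility with products, coproducts, or the signed anti-homomorphism property of $S$ needs to be reverified here. The only point demanding a small amount of care is that the exponent $n+m$ in the statement combines the total degree and fermionic degree, which corresponds precisely to the grading \eqref{newgrad} under which ${\bf \Lambda}$ is connected and graded as a Hopf algebra, so the sign tracking above is the whole content of the proof.
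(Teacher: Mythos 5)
Your proof is correct and follows essentially the same route as the paper: both verify the identity on the power-sum basis by comparing $S(p_\Lambda)=(-1)^{\ell(\Lambda)}p_\Lambda$ from \eqref{eqantipode} with $\omega(p_\Lambda)=(-1)^{|\Lambda|-\ell(\Lambda^s)}p_\Lambda$ and reconciling the signs via $\ell(\Lambda)=\ell(\Lambda^s)+m$. Your version just spells out the sign computation for $\omega(p_\Lambda)$ in slightly more detail.
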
  
\begin{proof}
From the definition of $\omega$, we have
\begin{equation}  
\omega(p_\Lambda)= (-1)^{|\Lambda|-\ell(\Lambda^s)} p_\Lambda
\end{equation}
The result thus holds since
$\ell(\Lambda)=\ell(\Lambda^s)+m$ and $|\Lambda|=n$
imply that
\begin{equation}
(-1)^{m+n} \omega(p_\Lambda)= (-1)^{\ell(\Lambda)} p_\Lambda= S(p_\Lambda)
\end{equation}  
  as we just saw in \eqref{eqantipode}.
\end{proof}
 Proposition~\ref{propdual} and \eqref{dualeh} then immediately give
 \begin{corollary} If $\Lambda$ is a superpartition of total degree $n$ and fermionic degree $m$, then the antipode $S$ is such that
  \begin{equation}
    S(e_\Lambda)= (-1)^{m+n} h_\Lambda \,, \qquad S(s_\Lambda)=(-1)^{\binom{m+1}{2}+n} \bar s_{\Lambda'}^*  \,, \qquad {\rm and}  \qquad \qquad S(\bar s_\Lambda)
    =(-1)^{\binom{m+1}{2}+n} s^*_{\Lambda'} 
  \end{equation}  
  where $s_\Lambda^*$ and $\bar s_\Lambda^*$ are the dual bases to $s_\Lambda$ and
  $\bar s_\Lambda$ respectively (see Proposition~\ref{propdual}).
\end{corollary}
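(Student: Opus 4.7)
The strategy is to convert the antipode into the involution $\omega$ via Proposition~\ref{propanti}, and then to evaluate $\omega$ on each basis using the duality relations already established. Since the proposition gives $S(f)=(-1)^{m+n}\omega(f)$ on ${\bf \Lambda}^{n,m}$, and since the bases $\{e_\Lambda\}$, $\{s_\Lambda\}$, $\{\bar s_\Lambda\}$ are homogeneous (a superpartition of total degree $n$ and fermionic degree $m$ labels an element of ${\bf \Lambda}^{n,m}$), applying Proposition~\ref{propanti} term by term will reduce everything to a computation of $\omega$.

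For the elementary case, the identity \eqref{dualeh}, $\omega(e_\Lambda)=h_\Lambda$, gives $S(e_\Lambda)=(-1)^{m+n}h_\Lambda$ immediately. For the two Schur cases, I would start from Proposition~\ref{propdual}, which reads $s_{\Lambda}^{*}=(-1)^{\binom{m}{2}}\omega\bar s_{\Lambda'}$ and $\bar s_{\Lambda}^{*}=(-1)^{\binom{m}{2}}\omega s_{\Lambda'}$. Applying $\omega$ and using $\omega^2={\rm id}$, and then relabelling $\Lambda\mapsto\Lambda'$ (noting that conjugation preserves the fermionic degree, since the number of circles in the diagram is the same), I obtain
\begin{equation}
\omega(s_\Lambda)=(-1)^{\binom{m}{2}}\bar s_{\Lambda'}^{*}\qquad\text{and}\qquad\omega(\bar s_\Lambda)=(-1)^{\binom{m}{2}}s_{\Lambda'}^{*}.
\end{equation}

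Combining these with Proposition~\ref{propanti} yields $S(s_\Lambda)=(-1)^{m+n+\binom{m}{2}}\bar s_{\Lambda'}^{*}$ and the analogous formula for $\bar s_\Lambda$. The last step is the sign identity
\begin{equation}
\binom{m+1}{2}=\binom{m}{2}+m,
\end{equation}
which converts $(-1)^{m+n+\binom{m}{2}}$ into $(-1)^{\binom{m+1}{2}+n}$ and matches the stated formulas. There is no real obstacle here; the only thing to be careful about is the bookkeeping of signs coming from the involution $\omega$, the fermionic factor $(-1)^{m+n}$ in Proposition~\ref{propanti}, and the $(-1)^{\binom{m}{2}}$ in Proposition~\ref{propdual}, and verifying that the fermionic degree is invariant under conjugation so that the substitution $\Lambda\leftrightarrow\Lambda'$ in Proposition~\ref{propdual} does not alter the sign.
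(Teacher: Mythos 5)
Your argument is correct and is exactly the route the paper takes: the paper states that the corollary follows "immediately" from Proposition~\ref{propanti}, \eqref{dualeh}, and Proposition~\ref{propdual}, and your computation (including the relabelling $\Lambda\mapsto\Lambda'$, the invariance of the fermionic degree under conjugation, and the identity $\binom{m+1}{2}=\binom{m}{2}+m$) is precisely the omitted bookkeeping.
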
  

\subsection{Self-duality}
The scalar product on ${\bf \Lambda}$ defined in \eqref{scalarprod}
can be extended to ${\bf \Lambda} \otimes {\bf \Lambda}$.
\begin{definition} The ring ${\bf \Lambda} \otimes {\bf \Lambda}$ has a scalar product defined as
  \begin{equation}
    \langle \! \langle f_1\otimes g_1, f_2\otimes g_2 \rangle \! \rangle
    = (-1)^{ab} \langle  \! \langle f_1,f_2 \rangle  \! \rangle    \langle \! \langle g_1,g_2\rangle  \! \rangle
  \end{equation}
  for $f_1,f_2,g_1,g_2 \in {\bf \Lambda}$ with $g_1$ and $f_2$ of fermionic degree
  $a$ and $b$ respectively.
\end{definition}
We should note that this is simply the pairing described in \eqref{scaltau}
in the case where $\mathcal H= \mathcal H'={\bf \Lambda}$.

The following proposition implies that the Hopf algebra ${\bf \Lambda}$ is
self-dual (in the topologist's sense) given that the other conditions in \eqref{selfdual} are trivially satisfied (the one involving the antipode follows from \eqref{eqantipode}).
\begin{proposition} We have
  \begin{equation}
  \langle \! \langle \Delta f, g \otimes h \rangle \! \rangle
  =  \langle \! \langle f, g  h \rangle \! \rangle
  \end{equation}
  for $f,g,h \in {\bf \Lambda}$.
\end{proposition}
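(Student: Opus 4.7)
The plan is to reduce to the power-sum basis and then verify the identity by direct expansion, tracking all the fermionic signs.

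First I would observe that both sides of the asserted identity are linear in $f$ and bilinear in $(g,h)$; since $\{p_\Lambda\}_{\Lambda \in \SPar}$ is a basis of $\mathbf{\Lambda}$, it suffices to prove
\begin{equation*}
\langle\!\langle \Delta p_\Lambda,\, p_\Omega \otimes p_\Gamma\rangle\!\rangle = \langle\!\langle p_\Lambda,\, p_\Omega p_\Gamma\rangle\!\rangle
\end{equation*}
for all superpartitions $\Lambda, \Omega, \Gamma$.

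Next I would expand $\Delta p_\Lambda$. By the primitivity of each $p_r$ and $\tilde p_k$ from~\eqref{eqcoprodp} and by the fact that $\Delta$ is an algebra morphism with respect to the twisted multiplication on $\mathbf{\Lambda}\otimes \mathbf{\Lambda}$, one has
\begin{equation*}
\Delta p_\Lambda \,=\, \prod_{i=1}^{\ell(\Lambda)} \bigl( q_{\Lambda_i} \otimes 1 + 1 \otimes q_{\Lambda_i}\bigr),
\end{equation*}
where $q_{\Lambda_i}$ denotes $\tilde p_{\Lambda_i}$ for a fermionic part and $p_{\Lambda_i}$ otherwise. Expanding this product with the twisted multiplication yields a signed sum indexed by choices of a subset $A$ of the index set of parts of $\Lambda$ (the indices going to the left tensor slot, with complement $B$ going to the right). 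Pairing against $p_\Omega \otimes p_\Gamma$ via \eqref{scalarprod} isolates those splittings for which the $A$-parts form $\Omega$ and the $B$-parts form $\Gamma$; this forces $\Omega^a \sqcup \Gamma^a = \Lambda^a$ and $\Omega^s \sqcup \Gamma^s = \Lambda^s$. The bosonic contributions combine to give $z_{\Omega^s} z_{\Gamma^s}$ multiplied by the number of such bosonic splittings, together with signs from the twisted product and from the $(-1)^{ab}$ in the tensor pairing.

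On the right-hand side, $p_\Omega p_\Gamma$ vanishes unless $\Omega^a \cap \Gamma^a = \emptyset$ (because $\tilde p_k^2 = 0$); otherwise it equals $\varepsilon(\Omega,\Gamma)\, p_{\Omega \cup \Gamma}$, where $\varepsilon(\Omega,\Gamma)$ is the sign of the permutation bringing $\Omega^a$ followed by $\Gamma^a$ into the strictly decreasing order of $\Lambda^a$, and then the scalar product gives $\delta_{\Lambda,\Omega\cup\Gamma}\, z_{\Lambda^s}$. The bosonic factor matches the LHS via the classical multinomial identity expressing $z_{\Lambda^s}$ in terms of $z_{\Omega^s}$, $z_{\Gamma^s}$, and the number of bosonic splittings.

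The main obstacle is the sign reconciliation: one has to check that the fermionic sign $\varepsilon(\Omega,\Gamma)$ from the RHS, together with the $(-1)^{ab}$ introduced by the tensor pairing, equals the global sign produced by the twisted multiplication on $\mathbf{\Lambda}\otimes \mathbf{\Lambda}$ when expanding $\Delta p_\Lambda$. All three contributions arise from transpositions of fermionic generators, and a careful enumeration should show that they assemble into the same overall sign. An alternative route, which may obviate some of this bookkeeping, is to encode everything in a Cauchy-type generating function for power sums in superspace and to read off the identity from the factorization of the kernel under the substitution replacing $x$ by the disjoint union $(x,y)$.
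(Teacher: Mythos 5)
Your strategy---reduce to the power-sum basis and expand $\Delta p_\Lambda=\prod_i\bigl(q_{\Lambda_i}\otimes 1+1\otimes q_{\Lambda_i}\bigr)$ in the twisted product---is a legitimate route and a genuinely different one from the paper's. The paper instead checks the identity on the Schur bases: combining Corollary~\ref{corocoschur}, Proposition~\ref{propskew} and \eqref{eqprodst}, both $\langle \! \langle \Delta s_\Lambda, s_\Omega^*\otimes s_\Gamma^*\rangle \! \rangle$ and $\langle \! \langle s_\Lambda, s_\Omega^* s_\Gamma^*\rangle \! \rangle$ evaluate to $(-1)^{ab}\,\bar c^{\Lambda'}_{\Omega'\Gamma'}$, so all sign bookkeeping is absorbed into identities already established. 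Your reduction to a basis, the description of the surviving splittings $\Lambda^a=\Omega^a\sqcup\Gamma^a$, $\Lambda^s=\Omega^s\sqcup\Gamma^s$, and the bosonic multinomial identity $z_{\Lambda^s}=z_{\Omega^s}z_{\Gamma^s}\prod_i\binom{n_i(\Lambda^s)}{n_i(\Omega^s)}$ are all correct.

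The gap is that the one step you defer---``a careful enumeration should show that they assemble into the same overall sign''---is the entire content of the proof on this route, and it does not close in the way you assert. Carrying out the count: the twisted expansion of $\Delta p_\Lambda$ contributes $(-1)^{\#\{(\gamma,\omega)\in\Gamma^a\times\Omega^a\,:\,\gamma>\omega\}}$, the tensor pairing contributes $(-1)^{m_\Gamma m_\Omega}$, while reordering $p_\Omega p_\Gamma$ into $p_\Lambda$ on the right-hand side contributes $(-1)^{\#\{(\gamma,\omega)\,:\,\gamma>\omega\}}$; these leave a residual factor $(-1)^{m_\Omega m_\Gamma}$, i.e.\ the enumeration yields $\langle \! \langle f, hg\rangle \! \rangle$ rather than $\langle \! \langle f, gh\rangle \! \rangle$. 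Concretely, for $f=p_{(2,1;\,)}=\tilde p_2\tilde p_1$, $g=\tilde p_1$, $h=\tilde p_2$, one finds $\Delta f= f\otimes 1+\tilde p_2\otimes\tilde p_1-\tilde p_1\otimes\tilde p_2+1\otimes f$, so the left side equals $(-1)\cdot(-1)^{1\cdot 1}\langle \! \langle\tilde p_1,\tilde p_1\rangle \! \rangle\langle \! \langle\tilde p_2,\tilde p_2\rangle \! \rangle=+1$, whereas $gh=-f$ gives $-z_{\emptyset}=-1$ on the right when the scalar product is taken literally as $\langle \! \langle p_\Lambda,p_\Omega\rangle \! \rangle=\delta_{\Lambda\Omega}z_{\Lambda^s}$. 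So the sign reconciliation is not a formality: completing your argument requires either finding an error in this count or pinning down the normalization of the scalar product (e.g.\ an extra $(-1)^{\binom{m}{2}}$ on the fermionic sector) and of the tensor pairing under which the two sides agree. As written, the proposal asserts exactly the step that fails the most basic test, and is therefore incomplete.
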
  
\begin{proof}
  It suffices to show that
  \begin{equation}
     \langle \! \langle \Delta s_\Lambda, s_\Omega^* \otimes s_\Gamma^* \rangle \! \rangle
  =  \langle \! \langle s_\Lambda, s_\Omega^*  s_\Gamma^* \rangle \! \rangle
   \end{equation}  
  From Corollary~\ref{corocoschur} and Proposition~\ref{propskew}, we have
  that
\begin{equation}
  \langle \! \langle \Delta s_\Lambda, s_\Omega^* \otimes s_\Gamma^* \rangle \! \rangle =  \langle \! \langle  \sum_\Delta s_{\Lambda/\Delta} \otimes s_{\Delta} , s_\Omega^* \otimes s_\Gamma^* \rangle \! \rangle = (-1)^{ab} \langle \! \langle s_{\Lambda/\Gamma} , s_\Omega^*  \rangle \! \rangle = (-1)^{ab}\bar c_{\Omega' \Gamma'}^{\Lambda'}
  \end{equation}
where $a$ and $b$ are the fermionic degrees of $\Omega$ and $\Gamma$ respectively.  On the other hand, if we use \eqref{eqprodst}, we get
\begin{equation}
 \langle \! \langle s_\Lambda, s_\Omega^*  s_\Gamma^* \rangle \! \rangle=  (-1)^{ab}\bar c_{\Omega' \Gamma'}^{\Lambda'}
\end{equation}  
and the proposition follows.
  \end{proof}  
We have thus proven the following proposition.
\begin{proposition} The ring ${\bf \Lambda}$ of symmetric functions in superspace is a cocommutative and self-dual Hopf algebra (in the topologist's sense).
\end{proposition}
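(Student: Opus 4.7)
The plan is to assemble this as a summary proposition that packages the three ingredients (Hopf algebra, cocommutativity, self-duality) already established piece by piece in the section. Since nothing new has to be proven, the proof is essentially a careful bookkeeping of what has come before, together with the verification of the handful of routine conditions in \eqref{selfdual} that were not explicitly addressed.

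First, I would recall that ${\bf \Lambda}$ was shown to be a graded connected bialgebra: the product is associative, the coproduct $\Delta$ defined by $(\Delta f)(x,y;\theta,\phi) = f(x,y;\theta,\phi)$ is coassociative and an algebra morphism by \eqref{dalgmor}, the counit $\epsilon$ is the projection onto ${\bf \Lambda}^{0,0}=\mathbb Q$, and the grading \eqref{newgrad} makes ${\bf \Lambda}^0=\mathbb Q$ one-dimensional. Hence Theorem~\ref{hopftheo} applies and gives the Hopf algebra structure with antipode $S$.

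Second, for cocommutativity in the topologist's sense, I would invoke Corollary~\ref{corocoschur} together with the identity $\bar c^{\Lambda'}_{\Gamma' \Omega'}=(-1)^{ab}\bar c^{\Lambda'}_{\Omega' \Gamma'}$ from the (anti-)commutation of Schur functions in superspace, which is exactly how \eqref{eqtwist} was derived. Since $\{s_\Lambda\}$ is a basis and $\tau\circ\Delta s_\Lambda=\Delta s_\Lambda$, cocommutativity extends by linearity to all of ${\bf \Lambda}$.

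Third, for self-duality I would check the conditions of \eqref{selfdual} with $\mathcal H=\mathcal H'={\bf \Lambda}$ and the pairing $\langle \! \langle \cdot, \cdot \rangle \! \rangle$. The compatibility $\langle \! \langle \Delta f, g\otimes h \rangle \! \rangle = \langle \! \langle f, gh \rangle \! \rangle$ is the content of the preceding proposition, and by symmetry of the pairing (equivalently, by applying the same argument to the Schur basis on the other side) it also yields $\langle \! \langle fg, a\otimes b \rangle \! \rangle = \langle \! \langle f\otimes g,\Delta(a)\rangle \! \rangle$. The unit/counit conditions $\langle \! \langle 1,a\rangle \! \rangle=\epsilon(a)$ and $\langle \! \langle f,1\rangle \! \rangle=\epsilon(f)$ hold because $p_\Lambda$ with $\Lambda\neq\emptyset$ has zero pairing with $p_\emptyset=1$. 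Finally, the antipode compatibility $\langle \! \langle S(f),a\rangle \! \rangle=\langle \! \langle f,S(a)\rangle \! \rangle$ follows from \eqref{eqantipode}, since on power-sums both sides equal $(-1)^{\ell(\Lambda)}\delta_{\Lambda\Omega}z_{\Lambda^s}$.

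The main subtlety to watch is the sign bookkeeping: the twist map \eqref{twistmap} injects $(-1)^{\deg\deg}$ signs into both the product on ${\bf \Lambda}\otimes{\bf \Lambda}$ and the pairing, so one must check that the signs in the self-duality identity match the signs coming from the anticommutation of the Schur functions --- but this is precisely the matching $\binom{a}{2}+\binom{b}{2}+ab=\binom{c}{2}$ used in the proof of the splitting proposition. Once that is noted, no further computation is required and the statement follows by collecting the three facts.
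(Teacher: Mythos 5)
Your proposal is correct and follows essentially the same route as the paper: the paper's "proof" is precisely the assembly you describe, namely the graded connected bialgebra structure plus Theorem~\ref{hopftheo} for the Hopf algebra, Corollary~\ref{corocoschur} with the sign identity $\binom{a}{2}+\binom{b}{2}+ab=\binom{c}{2}$ for cocommutativity via \eqref{eqtwist}, and the preceding proposition $\langle\!\langle \Delta f, g\otimes h\rangle\!\rangle=\langle\!\langle f,gh\rangle\!\rangle$ together with the trivially verified remaining conditions of \eqref{selfdual} (the antipode one via \eqref{eqantipode}) for self-duality. Your explicit check of the unit/counit and antipode compatibilities on the power-sum basis is a slightly more detailed spelling-out of what the paper dismisses as "trivially satisfied," but the argument is the same.
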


\section{The Hopf algebra of quasisymmetric functions in superspace}

Before introducing the ring of quasisymmetric functions in superspace,
we define our analogues of compositions.
  \begin{definition}
  A {\defstyle dotted composition} $(\alpha_1,\alpha_2,\dots,\alpha_l)$
  is a vector whose entries either belong to  $\{1,2,3,\dots\}$ or 
  to $\{\dot 0, \dot 1,\dot 2,\dots\}$.
The {\defstyle length} of $\ga$,
denoted $\ell(\ga)$, is the number of parts $l$ of $\alpha$.
We define the
sequence $\eta=\eta(\ga)=(\eta_1,\ldots,\eta_{\ell(\ga)})$ by
\begin{equation}\label{D:eta}
  \eta_i=\begin{cases}1&\text{if $\ga_i$ is dotted,}\\0&\text{otherwise.}\end{cases}
\end{equation}
We let $|\alpha|:=\alpha_1+\cdots +\alpha_l$
be the {\defstyle total degree} of $\alpha$ (in the sum, the dotted entries
are considered as if they did not have dots on them).
The number of dotted parts of $\alpha$ is called the {\defstyle fermionic degree} of $\ga$. We write $x^{\ga_i}_j$ whether
 $\ga_i$ is dotted or not.
\end{definition}

  The definition of the ring of quasisymmetric functions in superspace
  then extends naturally that of the usual quasisymmetric functions \cite{Gessel,GR,LMW}.

  \begin{definition}
  \label{D:qs}
Let $\mathcal R(x,\theta)$ be the ring of formal power series of finite degree in
$\Q[[x_1,x_2,\ldots,\te_1,\te_2,\ldots]]$.
The {\defstyle quasisymmetric functions in superspace} ${\rm sQSym}$
will be the $\mathbb Q$-vector space of the elements
$f$ of $\mathcal R(x,\theta)$ such that
for every dotted compositions
  $\ga=(\ga_1,\ldots,\ga_{\ell})$ with $\eta=\eta(\ga)$ as in
  \eqref{D:eta}, all monomials
  $\te_{i_1}^{\eta_1}\cdots\te_{i_{\ell}}^{\eta_{\ell}}x^{\ga_1}_{i_1}\cdots
  x^{\ga_{\ell}}_{i_{\ell}}$ in $f$ with indices $i_1<\cdots
  <i_{\ell}$ have the same coefficient.
\end{definition}

It is easy to see that ${\rm sQSym}$ is bigraded with respect of the total degree and the fermionic degree, that is,
\begin{equation}
{\rm sQSym} = \bigoplus_{n,m} {\rm sQSym}_{n,m}
\end{equation}  
where ${\rm sQSym}_{m,n}$ is the subspace of quasisymmetric functions in superspace of total degree $n$ and fermionic degree $m$.

\subsection{Monomial quasisymmetric functions in superspace}

There is a natural basis of  ${\rm sQSym}$ provided by the
generalization of the monomial quasisymmetric functions to superspace.
\begin{definition}
  \label{D:msq}
 Let $\alpha$ be a dotted composition with $\ell(\ga)=l$.  Then the {\defstyle monomial quasisymmetric
   function in superspace} $M_\ga$ is defined
 as
 \begin{equation}
   M_{\alpha}=\sum_{i_1<i_2<\cdots<i_l}
 \te_{i_1}^{\eta_1} \te_{i_2}^{\eta_2} \cdots \te_{i_l}^{\eta_l} x_{i_1}^{\ga_1}\cdots
 x_{i_l}^{\ga_l}
 \end{equation}
 where $\eta=\eta(\ga)$.
\end{definition}

\begin{example}
 Restricting to four variables, we have
 $$M_{\dot{3},1,2}(x_1,x_2,x_3,x_4;\te_1,\te_2,\te_3,\te_4)=\te_1x_1^3x_2x_3^2+\te_1x_1^3x_2x_4^2+\te_1x_1^3x_3x_4^2+\te_2x_2^3x_3x_4^2,$$
and
$$M_{3,\dot{1},\dot{2}}(x_1,x_2,x_3,x_4;\te_1,\te_2,\te_3,\te_4)=\te_2\te_3 x_1^3x_2x_3^2+\te_2\te_4 x_1^3x_2x_4^2+\te_3\te_4 x_1^3x_3x_4^2+\te_3\te_4 x_2^3x_3x_4^2.$$
\end{example}
For a dotted composition $\alpha$,
we will say that the term $\te_{1}^{\eta_1} \te_{2}^{\eta_2} \cdots \te_{l}^{\eta_l} x_{1}^{\ga_1}\cdots x_{l}^{\ga_l}$ is the {\defstyle leading term} of $M_\alpha$.
By symmetry, it is obvious that it suffices to know the coefficients of the leading terms that appear in a given $f  \in {\rm sQSym}$ in order to get its full expansion
in monomial quasisymmetric functions in superspace.

The ring of symmetric functions belongs to  ${\rm sQSym}$ since
the monomial symmetric function $m_{\Lambda}$ expands in the following
way in terms of $M_\alpha$'s:
\begin{equation} \label{menM}
m_\Lambda = \sum_{\alpha \, : \, \tilde \alpha=\gamma} (-1)^{\sigma(\alpha)} M_\alpha
\end{equation}  
where $\gamma$ is the dotted composition $(\dot \Lambda^a_1, \dots, \dot \Lambda^a_m,\Lambda^s_1,\dots,\Lambda^s_l)$ obtained from $\Lambda$, and where
$\tilde \alpha= \gamma$ whenever the entries of $\alpha$
rearrange to $\gamma$. Finally, $\sigma(\alpha)$ is the sign of the permutation needed to reorder the dotted entries of $\alpha$ (read from left to right) to
$(\dot \Lambda^a_1, \dots, \dot \Lambda^a_m)$.

We will now see that ${\rm sQSym}$ is a also ring.  For this purpose, we first need to understand how monomials in superspace multiply.
Let $\ga=(\ga_1,\ldots , \ga_l)$ and $\gb=(\gb_1,\ldots ,\gb_k)$ be
two dotted compositions.
The product rule is similar to the non-super case, with only the
addition of a sign. We begin our explanation with the
consideration of a typical product of two monomials $Q_1$ and $Q_2$ in
$M_{\ga}M_{\gb}$ giving rise to a leading term:

$$Q_1 Q_2=(\te^{\eta_1}_{i_1}\cdots\te^{\eta_{\ell}}_{i_{\ell}}x^{\ga_1}_{i_1}\cdots x^{\ga_{\ell}}_{i_{\ell}})(\te^{\eta'_1}_{i'_1}\cdots\te^{\eta'_{\ell'}}_{i'_{\ell'}}x^{\gb_1}_{i'_1}\cdots x^{\gb_{\ell'}}_{i'_{\ell'}})
= (-1)^s \te^{\mu_1}_{1}\cdots\te^{\mu_{r}}_{{r}}x^{\gamma_1}_{1}\cdots x^{\gamma_{r}}_{{r}}
$$
for some sign $s$, where
$\eta=\eta(\ga)$, $\eta'=\eta(\gb)$ and $\mu=\eta(\gamma)$.
\omitt{
$$\eta_j=\begin{cases}1&\textrm{if $\ga_j$ is dotted}\\0&\textrm{if $\ga_j$ is not dotted}\\\end{cases}\text{ and }\eta'_j=\begin{cases}1&\textrm{if $\gb_j$ is dotted}\\0&\textrm{if $\gb_j$ is not dotted}\\\end{cases}$$}
Since $\te_i\te_j=-\te_j\te_i$, we must consider how the indices of
the monomials combine to determine the sign $s$.
If we let
$$S=S(\ga,\gb)=\{(p,q)|\ga_q\textrm{ is dotted in
  $\ga$, $\gb_p$ is dotted in $\gb$, and $i'_p<i_q$}\},$$
then it is easy to deduce that
$s=|S|$ since $s$ is the number of pairs $\te_{i_q}\te_{i'_p}$
which have to switch to $\te_{i'_p}\te_{i_q}$ when we put the
variables in increasing order. If $h=i_q=i'_p$ and both
$\ga_q$ and $\gb_p$ are dotted, then $Q_1 Q_2=0$.

The dotted composition $\gamma$ is given by
\begin{equation}\label{D:gamma}\gamma_h=\begin{cases}\ga_q&\text{if $h=i_q,h\neq i'_p$ all $p$},\\
\gb_p&\text{if $h=i'_p,h\neq i_q$ all $q$},\\
\ga_q+\gb_p&\text{if $h=i_q$ and $h=i'_p$},\\
\end{cases}\end{equation}
where in all cases, $\gamma_h$ is dotted if either $\ga_q$ or $\gb_p$ is.

\omitt{$$\mu_j=\begin{cases}1&\textrm{if $\gamma_j$ is dotted, and}\\0&\textrm{if $\gamma_j$ is not dotted.}\\\end{cases}$$}

As in \cite{LMW}, we can encode the pair $Q_1, Q_2$ as a path. We make a $\ell'$
by $\ell$ grid and label the rows by $\gb$ and the columns by $\ga$.
If both $\ga_q$ and $\gb_p$ are dotted, then place a dot in the cell
in row $p$ and column $q$. The path $P$ in the $(x,y)$ plane
from $(0,0)$ to $(\ell,\ell')$ with steps $(0,1)$, $(1,0)$, and
$(1,1)$ is similar to the paths defined in \cite[Section 3.3.1]{LMW}.
In our case, paths are not allowed to step diagonally over cells where
both $\ga_q$ and $\gb_p$ are dotted. The $h^{\text{th}}$ step of $P$
will be horizontal if $\gamma_h=\ga_q$, which is case one of
\eqref{D:gamma}, and it will be vertical if $\gamma_h=\gb_p$, which is
case two of \eqref{D:gamma}. Finally it will be diagonal in the third
case, where $\gamma_h=\ga_q+\gb_p$.  The path $P$ is in bijection with $Q_1$ and $Q_2$, and as such, the set of all paths determine all possible leading terms, or equivalently, all possible quasi-monomials that appear in the product.
We denote the dotted composition corresponding to the path $P$ by $\gamma=\Gamma(P)$. We call the set of all
paths which can be obtained from $\ga$ and $\gb$ in this manner {\defstyle
  the set of $(\ga,\gb)$ overlapping shuffles}.


Suppose $\ga_q$ and $\gb_p$ are both dotted and the path $P$ lies
above the $(p,q)$ cell. Then $P$ took the vertical step over row $p$
before taking the horizonal step over column $q$, meaning
$i'_p<i_q$. The pair $(p,q)$ is an element of $S$. Similarly, if $P$
lies below, then $(p,q)$ will not be an element of $S$. We have now
verified the following proposition.

\begin{proposition} \label{propmultM}
Suppose $\ga$ and $\gb$ are dotted compositions. Then
\begin{equation}
  M_{\ga}M_{\gb} =  \sum_P \sign(P) M_{\Gamma(P)}
  \end{equation}
where the sum is over all $(\ga,\gb)$ overlapping shuffles
and where
the {\defstyle
  sign} of the path  $P$ is given by
\begin{equation}
\sign(P)= (-1)^{\textrm{number of dots below the path }P}
\end{equation}  
\end{proposition}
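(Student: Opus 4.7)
The plan is to unfold the monomial expansion of the product $M_\alpha M_\beta$ and regroup its terms according to the combinatorial data of an $(\alpha,\beta)$ overlapping shuffle. Write
\begin{equation*}
M_\alpha M_\beta = \sum_{i_1<\cdots<i_\ell}\sum_{i'_1<\cdots<i'_{\ell'}} Q_1(\vec i)\, Q_2(\vec i{\,'})
\end{equation*}
where $Q_1,Q_2$ are the generic monomials written out in the discussion preceding the proposition. For each pair of index sequences, the merged support $\{i_1,\dots,i_\ell\}\cup\{i'_1,\dots,i'_{\ell'}\}$ partitions into indices appearing only in $\vec i$, only in $\vec i{\,'}$, or in both; this pattern is exactly encoded by a lattice path from $(0,0)$ to $(\ell,\ell')$ with horizontal, vertical, and diagonal steps. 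The constraint $\theta_h^2=0$ forces $Q_1Q_2=0$ whenever $i_q=i'_p$ with both $\alpha_q$ and $\beta_p$ dotted, which matches the ban on diagonal steps through doubly-dotted cells. Thus the nonzero contributions are parametrised exactly by the $(\alpha,\beta)$ overlapping shuffles.

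For a fixed path $P$, I would then sum over all strictly increasing global index sequences compatible with $P$. The merged multi-index is strictly increasing by construction, and the exponent sequence reads off to be $\Gamma(P)$, with $\theta$-content dictated by case (3) of \eqref{D:gamma}. After moving the $\theta$'s to the front and ordering them by index, what remains is precisely the sum
$\sum_{j_1<\cdots<j_r}\theta_{j_1}^{\mu_1}\cdots\theta_{j_r}^{\mu_r} x_{j_1}^{\gamma_1}\cdots x_{j_r}^{\gamma_r}= M_{\Gamma(P)}$,
times a global sign $(-1)^{|S|}$ coming from the reordering of the anticommuting variables in every term of the sum. (The sign is independent of the particular merged indices because only the relative order, fixed by $P$, matters.)

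The main obstacle, and the only nontrivial combinatorial point, is the identification $|S|=\#\{\text{dotted cells below }P\}$. To verify this I would argue cell by cell: a dotted cell sits at $(p,q)$ precisely when both $\alpha_q$ and $\beta_p$ are dotted. With the path encoding, the vertical step labeled by row $p$ precedes the horizontal step labeled by column $q$ exactly when $i'_p<i_q$, which is the geometric statement that $P$ passes above the cell $(p,q)$; this is exactly the defining condition for $(p,q)\in S$. The no-diagonal-step convention at doubly-dotted cells removes the only ambiguous case, so the correspondence is sharp. Hence $\sign(P)=(-1)^{|S|}=(-1)^{\#\{\text{dots below }P\}}$, and summing over all paths yields
\begin{equation*}
M_\alpha M_\beta = \sum_{P} \sign(P)\, M_{\Gamma(P)},
\end{equation*}
as claimed.
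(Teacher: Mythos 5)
Your proposal is correct and follows essentially the same route as the paper: the paper's "proof" is precisely the discussion preceding the proposition, which encodes each pair of index sequences as a lattice path (banning diagonal steps over doubly-dotted cells because $\theta_h^2=0$), reads off $\Gamma(P)$ from the three cases of \eqref{D:gamma}, and identifies the reordering sign $(-1)^{|S|}$ with $(-1)^{\#\{\text{dots below }P\}}$ via the same observation that $(p,q)\in S$ exactly when $P$ passes above the cell $(p,q)$. Your added remark that the sign depends only on the relative order of the merged indices (hence is constant over all monomials attached to a fixed path) makes explicit a point the paper leaves to the quasisymmetry of the leading-term bookkeeping.
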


\begin{example}
Let $\ga = (\dot{3},2)$ and $\gb = (\dot{4},1)$.

\begin{center}
\setlength{\unitlength}{0.6mm}
\begin{picture}(20,20)(0,0)
\put(0,0){\line(1,0){20}}
\put(0,10){\line(1,0){20}}
\put(0,20){\line(1,0){20}}
\put(0,0){\line(0,1){20}}
\put(10,0){\line(0,1){20}}
\put(20,0){\line(0,1){20}}

\put(5,5){\circle*{1.6}}

\put(3,-4){\tiny $\dot{3}$}
\put(13,-4){\tiny $2$}

\put(-3,4){\tiny $\dot{4}$}
\put(-3,14){\tiny $1$}

\end{picture}
\end{center}

Then $M_{\dot{3},2} M_{\dot{4},1} = M_{\dot{3},2,\dot{4},1} +
M_{\dot{3},\dot{6} ,1} + M_{\dot{3},\dot{4},2,1} +
M_{\dot{3},\dot{4},3} + M_{\dot{3},\dot{4},1,2} -
M_{\dot{4},\dot{3},2,1} - M_{\dot{4},\dot{3},3} -
M_{\dot{4},\dot{3},1,2} - M_{\dot{4},\dot{4},2} -
M_{\dot{4},1,\dot{3},2}$

\end{example}

\subsection{Hopf algebra structure of {\rm sQSym}}
Now that we have established that ${\rm sQSym}$ is an algebra, we will show that it is a also a Hopf algebra.
As we did earlier in the case of symmetric functions in superspace, we will identify
${\rm sQSym} \otimes_{\mathbb Q} {\rm sQSym}$ (which from now on will be denoted ${\rm sQSym} \otimes \, {\rm sQSym}$ for simplicity) with quasisymmetric functions in two sets of variables
$(x_1,x_2,\dots;\theta_1,\theta_2,\dots)$ and $(y_1,y_2,\dots;\phi_1,\phi_2,\dots)$,  where \quad $x_1<x_2 <
\ldots < y_1 < y_2 < \ldots$ and $\theta_1<\theta_2 <
\ldots < \phi_1 < \phi_2 < \ldots$, 
with
the extra requirement that the variables $\theta$ and $\phi$ anticommute.
This way,
$f \otimes g$ corresponds to $f(x;\theta) g(y;\phi)$ and
${\rm sQSym}  \otimes {\rm sQSym} $ becomes an algebra with a product satisfying the relation
\begin{equation}
  (f_1\otimes g_1) \cdot  (f_2\otimes g_2)=
   (-1)^{ab}  f_1 f_2 \otimes g_1 g_2
    \end{equation}
  for $f_1,f_2,g_1,g_2 \in {\rm sQSym} $ with $g_1$ and $f_2$ of fermionic degree $a$ and $b$ respectively.

 The comultiplication
$\Delta: {\rm sQSym}  \to {\rm sQSym}  \otimes {\rm sQSym}  $ is defined as
\begin{equation}
(\Delta f)(x,y;\theta,\phi) = f(x,y;\theta,\phi)
\end{equation}  
where as we just mentioned, $f(x,y;\theta,\phi)$ is considered an element
of ${\rm sQSym}  \otimes {\rm sQSym}$.   Contrary to the symmetric functions in superspace case, it is not immediately obvious this time that
the coproduct is coassociative given the ordering on the variables
(see the corresponding discussion in \cite{GR} in the non-supersymmetric case).
But we will see in Proposition~\ref{propqsym}
that it easily follows from the next proposition.  

Given the dotted compositions $\ga=(\ga_1,\ldots,\ga_k)$ and
$\gb=(\gb_1,\ldots,\gb_{\ell})$, we define their {\defstyle concatenation} $\ga\cdot\gb$ to be the dotted composition
$(\ga_1,\ldots,\ga_k,\gb_1,\ldots,\gb_{\ell})$.

\begin{proposition}\label{coprod} We have, for any dotted composition $\ga=(\ga_1\dots,\ga_l)$, that
  \begin{equation} \label{coprodM}
    \Delta(M_{\ga})= \sum_{\beta \cdot \gamma=\alpha} M_\beta \otimes M_{\gamma}
=
    \sum_{k=0}^{l} M_{\ga_1, \ldots , \ga_k}\otimes  M_{\ga_{k+1}, \ldots , \ga_l}
    \end{equation}

\end{proposition}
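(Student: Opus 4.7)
The plan is to unfold the definition $\Delta(M_\alpha) = M_\alpha(x,y;\theta,\phi)$ and partition the resulting sum of monomials according to how the indices distribute between the two alphabets. By Definition~\ref{D:msq} applied to the merged alphabet with ordering $x_1<x_2<\cdots<y_1<y_2<\cdots$ (and correspondingly $\theta_1<\theta_2<\cdots<\phi_1<\phi_2<\cdots$), we have
\begin{equation*}
M_\alpha(x,y;\theta,\phi) = \sum_{z_{i_1}<\cdots<z_{i_l}} \zeta_{i_1}^{\eta_1}\cdots\zeta_{i_l}^{\eta_l}\, z_{i_1}^{\alpha_1}\cdots z_{i_l}^{\alpha_l},
\end{equation*}
where $z$ runs over the merged commuting alphabet and $\zeta$ over the merged anticommuting alphabet. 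Since every $x$ is smaller than every $y$ in this ordering, any such strictly increasing sequence $z_{i_1}<\cdots<z_{i_l}$ is determined by an integer $k$ with $0\le k\le l$ together with an increasing choice of $k$ indices from the $x$-alphabet and an increasing choice of $l-k$ indices from the $y$-alphabet; that is, each sequence splits uniquely into an $x$-prefix of length $k$ followed by a $y$-suffix of length $l-k$.

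Next I would verify that no sign is introduced by this splitting. In the monomial above, the $\zeta$'s occur in the order of the $i_j$'s, so for the split corresponding to a given $k$, the first $k$ anticommuting factors are $\theta$'s (in their original order) and the last $l-k$ are $\phi$'s (in their original order); no transpositions of Grassmann variables are performed. Moreover, the commuting variables $x_{i_1}^{\alpha_1}\cdots x_{i_k}^{\alpha_k}$ commute past the block $\phi_{j_1}^{\eta_{k+1}}\cdots\phi_{j_{l-k}}^{\eta_l}$ freely, so the monomial factors as
\begin{equation*}
\bigl(\theta_{i_1}^{\eta_1}\cdots\theta_{i_k}^{\eta_k} x_{i_1}^{\alpha_1}\cdots x_{i_k}^{\alpha_k}\bigr)\cdot \bigl(\phi_{j_1}^{\eta_{k+1}}\cdots\phi_{j_{l-k}}^{\eta_l}\, y_{j_1}^{\alpha_{k+1}}\cdots y_{j_{l-k}}^{\alpha_l}\bigr),
\end{equation*}
which under the identification $f\otimes g \leftrightarrow f(x;\theta)g(y;\phi)$ is exactly a leading-term summand of $M_{\alpha_1,\ldots,\alpha_k}(x;\theta)\otimes M_{\alpha_{k+1},\ldots,\alpha_l}(y;\phi)$.

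Summing over all strictly increasing $x$-subsequences of length $k$ and all strictly increasing $y$-subsequences of length $l-k$ recovers exactly $M_{\alpha_1,\ldots,\alpha_k}\otimes M_{\alpha_{k+1},\ldots,\alpha_l}$, and then summing over $k$ from $0$ to $l$ yields the claimed formula. The main obstacle is ensuring no spurious signs appear; this is resolved by the observation above that the Grassmann letters are never permuted relative to one another in the splitting, and that $x$'s and $y$'s commute through all $\theta$'s and $\phi$'s. The equivalence of the two displayed forms in \eqref{coprodM} is simply the bijection between splittings $\alpha=\beta\cdot\gamma$ and integers $k\in\{0,1,\ldots,l\}$.
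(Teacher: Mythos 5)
Your proposal is correct and follows essentially the same route as the paper: expand $M_\alpha$ over the merged ordered alphabet, note that each strictly increasing index sequence splits uniquely into an $x$-prefix and a $y$-suffix, and factor the corresponding monomial with no sign because the Grassmann letters are never transposed and the commuting $x$'s slide freely past the $\phi$'s. Your write-up is in fact somewhat more explicit about the sign bookkeeping than the paper's one-line argument.
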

\begin{proof}
  For any $k\in \{ 0,1, \cdots  , l\}$, a monomial in $M_\ga(x,y;\te,\phi)$ is written uniquely in the form $$\te_{i_1}^{\eta_1} \cdots \te_{i_k}^{\eta_k} \phi_{j_1}^{\eta_{k+1}} \cdots \phi_{j_{l-k}}^{\eta_l} x_{i_1}^{\ga_1} \cdots x_{i_k}^{\ga_k} y_{j_1}^{\ga_{k+1}} \cdots y_{j_{l-k}}^{\ga_l} = \te_{i_1}^{\eta_1} \cdots \te_{i_k}^{\eta_k}x_{i_1}^{\ga_1} \cdots x_{i_k}^{\ga_k} \, \phi_{j_1}^{\eta_{k+1}} \cdots \phi_{i_{l-k}}^{\eta_l} y_{j_1}^{\ga_{k+1}} \cdots y_{j_{l-k}}^{\ga_l}$$
with $i_1 < \cdots <i_k$ and $j_1 < \cdots <j_{l-k}$.
\end{proof}

\begin{example}
$$\Delta(M_{\dot{2},1,\dot{3},4})= 1 \otimes M_{\dot{2},1,\dot{3},4}+ M_{\dot{2}}\otimes M_{1,\dot{3},4} +  M_{\dot{2},1}\otimes M_{\dot{3},4} +  M_{\dot{2},1,\dot{3}}\otimes M_{4} +  M_{\dot{2},1,\dot{3},4}\otimes 1$$
\end{example}

\begin{proposition} \label{propqsym}
  The $\mathbb Q$-algebra ${\rm sQSym}$ is a commutative graded connected Hopf algebra.  Moreover, it contains
 the ring of symmetric functions in superspace ${\bf \Lambda}$ as a Hopf subalgebra.
\end{proposition}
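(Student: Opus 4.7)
The plan is to assemble the Hopf algebra axioms directly from Proposition~\ref{propmultM} and Proposition~\ref{coprod} and then invoke Theorem~\ref{hopftheo}. First, commutativity: since ${\rm sQSym}$ sits inside the formal power series ring $\mathcal R(x,\te)$ where the $x_i$ commute, $\te_i\te_j=-\te_j\te_i$, and the $x$'s commute with the $\te$'s, the multiplication on ${\rm sQSym}$ automatically satisfies $fg=(-1)^{ab}gf$ for homogeneous $f,g$ of fermionic degrees $a,b$, which is commutativity in the topologist's sense of Section~\ref{SecHopf}. Second, the bigrading ${\rm sQSym}=\bigoplus_{n,m}{\rm sQSym}_{n,m}$ (or, equivalently, the single grading analogous to \eqref{newgrad}) is preserved by the product, since every path $P$ in Proposition~\ref{propmultM} yields $M_{\Gamma(P)}$ of total degree $|\ga|+|\gb|$ and fermionic degree the sum of those of $\ga$ and $\gb$, and it is preserved by the coproduct, since any concatenation $\beta\cdot\gamma=\ga$ splits both degrees additively. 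The piece of degree zero is spanned by $M_{\emptyset}=1$, so ${\rm sQSym}$ is connected.

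Third, coassociativity is immediate from Proposition~\ref{coprod} together with the associativity of concatenation of dotted compositions: applying either $(\Delta\otimes 1)$ or $(1\otimes\Delta)$ to $M_{\ga}$ and using \eqref{coprodM} twice yields the same triple sum $\sum_{\beta\cdot\gamma\cdot\delta=\ga} M_{\beta}\otimes M_{\gamma}\otimes M_{\delta}$. Define the counit $\epsilon:{\rm sQSym}\to\Q$ as the projection onto the degree zero piece; the two counit axioms then follow because only the $k=0$ and $k=l$ terms on the right of \eqref{coprodM} contribute. That $\Delta$ is an algebra morphism is essentially tautological from the definition $(\Delta f)(x,y;\te,\phi)=f(x,y;\te,\phi)$: once the product on ${\rm sQSym}\otimes{\rm sQSym}$ is identified (as in the discussion preceding Proposition~\ref{coprod}) with the pointwise product of functions in the combined alphabet with the sign $(-1)^{ab}$ encoded by the anticommutation of $\te$ with $\phi$, both sides of $\Delta(fg)=\Delta(f)\cdot\Delta(g)$ equal $(fg)(x,y;\te,\phi)$. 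Moreover, Proposition~\ref{coprod} guarantees that $\Delta$ actually lands in ${\rm sQSym}\otimes{\rm sQSym}$, not merely in the larger power series ring. With a graded connected bialgebra in hand, Theorem~\ref{hopftheo} furnishes the antipode and completes the Hopf algebra structure.

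For the second claim, the inclusion ${\bf \Lambda}\hookrightarrow{\rm sQSym}$ on the monomial basis is provided by \eqref{menM}. It is an algebra morphism because both products are inherited from $\mathcal R(x,\te)$. The two coproducts, one on ${\bf \Lambda}$ and one on ${\rm sQSym}$, are defined by the same recipe $f(x,y;\te,\phi)$ interpreted in the appropriate tensor product, so they agree on any $f\in{\bf \Lambda}$; the counits match since both detect the constant term, and the antipodes then coincide on ${\bf \Lambda}$ by the uniqueness clause of Theorem~\ref{hopftheo}. Hence ${\bf \Lambda}$ is a Hopf subalgebra of ${\rm sQSym}$.

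The step that requires the most care is verifying that $\Delta$ is an algebra morphism: the product on ${\rm sQSym}$ is the signed overlapping shuffle of Proposition~\ref{propmultM}, while the product on the target ${\rm sQSym}\otimes{\rm sQSym}$ carries an independent $(-1)^{ab}$ twist coming from the anticommutation of $\te$ and $\phi$. The point is that both sign conventions originate from the same underlying anticommutation in $\mathcal R$, so once the identification of ${\rm sQSym}\otimes{\rm sQSym}$ with quasisymmetric functions in $(x,y;\te,\phi)$ is in place, there is nothing to check beyond the tautology that substitution into the combined alphabet commutes with multiplication; the rest of the proof is then essentially a translation of the well known non-supersymmetric argument.
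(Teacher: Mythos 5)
Your proof is correct and follows essentially the same route as the paper's: coassociativity from the concatenation formula of Proposition~\ref{coprod}, the algebra-morphism property of $\Delta$ from the identification of ${\rm sQSym}\otimes{\rm sQSym}$ with quasisymmetric functions in the combined ordered alphabet, the obvious counit and grading, and Theorem~\ref{hopftheo} to supply the antipode. The only (minor) divergence is in the subalgebra claim, where the paper verifies agreement of the two coproducts just on the generators $p_i=M_{i}$ and $\tilde p_k=M_{\dot k}$, whereas you argue directly that both coproducts are given by the same alphabet-splitting recipe on all of ${\bf \Lambda}$; both arguments are valid.
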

\begin{proof}
  The coassociativity of the coproduct is proved by verifying
$(\Delta \otimes {\rm id}) \circ \Delta=({\rm id} \otimes \Delta) \circ \Delta$
  on the monomial basis.  We have
  \begin{align}
\bigl((\Delta \otimes {\rm id}) \circ \Delta\bigr)M_\alpha & = \sum_{k=0}^l \Delta M_{\alpha_1,\dots,\alpha_k} \otimes M_{\alpha_{k+1},\dots,\alpha_l} \nonumber \\ 
& = \sum_{k=0}^l\sum_{i=0}^k M_{\alpha_1,\dots,\alpha_i} \otimes M_{\alpha_{i+1},\dots,\alpha_k} \otimes M_{\alpha_{k+1},\dots,\alpha_l}
  \end{align}
  which shows the coassociativity since $\bigl(({\rm id} \otimes \Delta) \circ \Delta\bigr)M_\alpha$ obviously yields the same result.

  The coproduct is an algebra morphism given that
$(fg)(x,y;\theta,\phi)=f(x,y;\theta,\phi) g(x,y;\theta,\phi)$,
for all $f,g \in {\rm sQSym}$ implies
 \begin{equation}
\Delta(fg)= \Delta(f) \cdot \Delta(g)
\end{equation}
(the other condition in \eqref{morphism} is trivially satisfied).

 The counit $\epsilon$ is as usual the identity on ${\rm sQSym}_{0,0}=\mathbb Q$ and
 the null operator on the rest of ${\rm sQSym}$.  It is easily checked that $\epsilon$ is an algebra morphism.  Defining the grading 
\begin{equation}
{\rm sQSym}_n = \bigoplus_{k+i=n} {\rm sQSym}_{k,i}
\end{equation}  
it is also easy to see that ${\rm sQSym}$ is a graded and connected bialgebra.
By Theorem~\ref{hopftheo}, this
  implies that ${\rm sQSym}$ is a Hopf algebra.
 
Finally,  to prove that ${\bf \Lambda}$ is a Hopf subalgebra
of  ${\rm sQSym}$, we need to prove that
when $\Delta$ is restricted to the subalgebra
 ${\bf \Lambda} \subset {\rm sQSym}$, it is equal to the coproduct $\Delta$ in 
 ${\bf \Lambda}$.  It suffices to prove the claim on $p_i$ and $\tilde p_k$ for
 $i\geq 1 $ and $k\geq 0$ since they generate  ${\bf \Lambda}$.
 Using \eqref{coprode} and Proposition~\ref{coprod}, this is an immediate consequence of the fact that  $p_i=M_{i}$ and $\tilde p_k=M_{\dot k}$.
 
 \end{proof}  

\subsection{Partial orders on compositions} \label{subpartial}

We will define two partial orders on dotted compositions. Given
compositions $\ga$ and $\gb$, we say that $\ga$ covers $\gb$ in the
first partial order, written $\gb \preccurlyeq \ga$, if we can obtain $\ga$
by adding together a pair of adjacent non-dotted parts of $\gb$. The
first partial order is the transitive closure
of this cover relation.  If  $\gb \preccurlyeq \ga$ we say that
{\defstyle $\gb$ strongly refines $\ga$} or that $\ga$ {\defstyle strongly coarsens $\gb$}.

The second partial order on dotted compositions is generated by the
following covering relation: $\gb\trianglelefteq \ga$ if we can
obtain $\ga$ by adding together two adjacent parts of $\gb$, not both
parts dotted (note that adding together a dotted part with an non-dotted one yields a dotted part). If $\gb\trianglelefteq \ga$ we say this time that
{\defstyle $\gb$ weakly refines $\ga$} or that $\ga$ {\defstyle weakly coarsens $\gb$}.

Please see Figure~\ref{Fig:twoPosets} for the two orders.
When no parts are dotted, both covering relations become the covering
 relation on compositions described in \cite{LMW} and \cite{GR}.


\begin{center}
\begin{figure}[ht]
\begin{tikzpicture}[font=\small]
  \def\a{2}
  \def\b{1.5}
  \node (11212) at (0,1*\b) {$(1,1,\dot{2},1,2)$};
  \node (2212) at (-1,2*\b) {$(2,\dot{2},1,2)$};
  \node (1123) at (1,2*\b) {$(1,1,\dot{2},3)$};
  \node (223) at (0,3*\b) {$(2,\dot{2},3)$};

  \draw[thick, red] (11212)--(2212);
  \draw[thick, red] (11212)--(1123);

   \draw[thick, red] (223)--(2212);
   \draw[thick, red] (223)--(1123);

\begin{scope}[shift={(4.5*\a,0)}]

\node (11212) at (0,0*\b) {$(1,1,\dot{2},1,2)$};

\node (2212) at (-1.5*\a,1*\b) {$(2,\dot{2},1,2)$};
\node (1312) at (-.5*\a,1*\b) {$(1,\dot{3},1,2)$};
\node (1132) at (.5*\a,1*\b) {$(1,1,\dot{3},2)$};
\node (1123) at (1.5*\a,1*\b) {$(1,1,\dot{2},3)$};
 
\node (412) at (-2.5*\a,2*\b) {$(\dot{4},1,2)$};
\node (232) at (-1.5*\a,2*\b) {$(2,\dot{3},2)$};
\node (223) at (-.5*\a,2*\b) {$(2,\dot{2},3)$};
\node (115) at (.5*\a,2*\b) {$(1,1,\dot{5})$};
\node (142) at (1.5*\a,2*\b) {$(1,\dot{4},1)$};
\node (133) at (2.5*\a,2*\b) {$(1,\dot{3},3)$};

\node (52) at (-1.5*\a,3*\b) {$(\dot{5},2)$};
\node (43) at (-.5*\a,3*\b) {$(\dot{4},3)$};
\node (25) at (.5*\a,3*\b) {$(2,\dot{5})$};
\node (16) at (1.5*\a,3*\b) {$(1,\dot{6})$};

\node (7) at (0*\a,4*\b) {$(\dot{7})$};

\draw[thick, red] (11212)--(2212);
\draw[thick, red] (11212)--(1312);
\draw[thick, red] (11212)--(1132);
\draw[thick, red] (11212)--(1123);

\draw[thick, red] (412)--(2212);
\draw[thick, red] (232)--(2212);
\draw[thick, red] (223)--(2212);
\draw[thick, red] (412)--(1312);
\draw[thick, red] (142)--(1312);
\draw[thick, red] (133)--(1312);
\draw[thick, red] (232)--(1132);
\draw[thick, red] (115)--(1132);
\draw[thick, red] (142)--(1132);
\draw[thick, red] (223)--(1123);
\draw[thick, red] (133)--(1123);

\draw[thick, red] (412)--(52);
\draw[thick, red] (232)--(52);
\draw[thick, red] (223)--(43);
\draw[thick, red] (412)--(43);
\draw[thick, red] (142)--(52);
\draw[thick, red] (133)--(43);
\draw[thick, red] (232)--(25);
\draw[thick, red] (115)--(25);
\draw[thick, red] (115)--(16);
\draw[thick, red] (142)--(16);
\draw[thick, red] (223)--(25);
\draw[thick, red] (133)--(16);

\draw[thick, red] (7)--(52);
\draw[thick, red] (7)--(43);
\draw[thick, red] (7)--(25);
\draw[thick, red] (7)--(16);

\end{scope}

  \end{tikzpicture}
  \caption{The poset on the left is all dotted compositions above
  $(1,1,\dot{2},1,2)$ using the partial partial order ($\preccurlyeq$). On the
  right the poset is again all dotted compositions above $(1,1,\dot{2},1,2)$,
  but using the partial partial order $(\trianglelefteq)$.}
\label{Fig:twoPosets}
\end{figure}
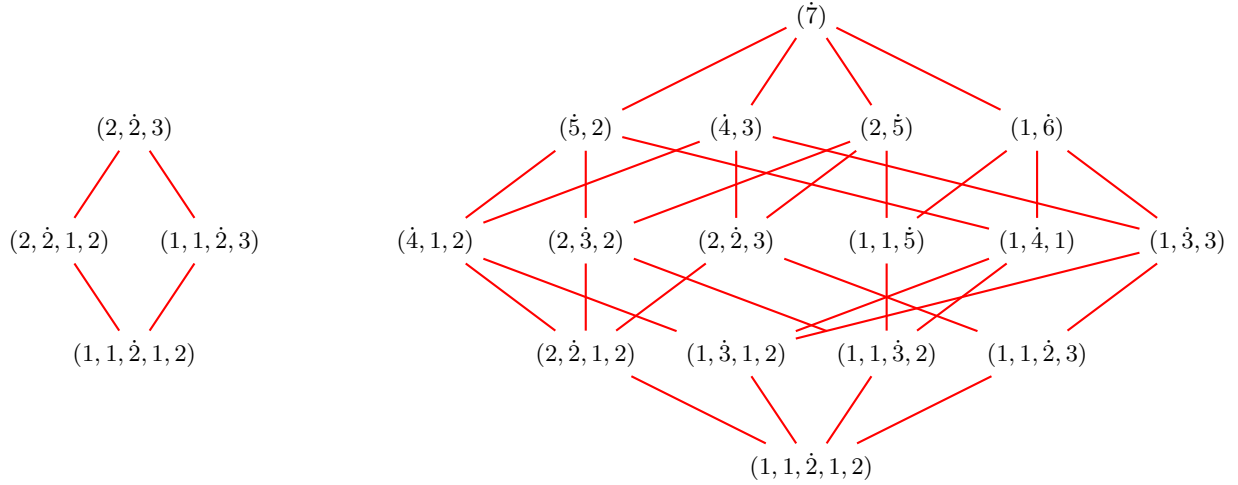
\end{center}

\subsection{Antipode}
 We now give the action of the antipode $S: {\rm sQSym} \to {\rm sQSym}$ 
  explicitly on monomial quasisymmetric
  functions in superspace.

Let the {\defstyle reverse} of a composition $\ga=(\ga_1,\ldots,\ga_k)$ be
$\Rev(\ga)=(\ga_k,\ldots,\ga_1)$.
 \begin{proposition}
\label{prop:antiM}
  Let $\ga$ be a dotted composition. Then
\begin{equation}
  S(M_{\ga})=(-1)^{\ell(\ga) + \binom{m_\ga}{2} }\sum_{\gamma\,  \trianglerighteq \,  \Rev(\ga)}M_{\gamma}
\end{equation}
where $m_\alpha$ is the fermionic degree of the dotted composition $\alpha$.
  \end{proposition}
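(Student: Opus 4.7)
The plan is to proceed by induction on $\ell(\alpha)$, using the recursive characterization of the antipode in a graded connected bialgebra (Theorem~\ref{hopftheo}) together with Proposition~\ref{coprod} for the coproduct and Proposition~\ref{propmultM} for the product rule.

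The base case $\ell(\alpha)=1$ is immediate: Proposition~\ref{coprod} shows that $M_\alpha$ is then primitive, so \eqref{eqantipo} yields $S(M_\alpha)=-M_\alpha$, matching the claimed formula since $m_\alpha\in\{0,1\}$ forces $\binom{m_\alpha}{2}=0$ and the only $\gamma\trianglerighteq\Rev(\alpha)$ is $\gamma=\alpha$ itself.

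For the inductive step, writing $\ell=\ell(\alpha)$ and combining \eqref{eqantipo} with Proposition~\ref{coprod} gives
\begin{equation*}
S(M_\alpha) \,=\, -\sum_{k=0}^{\ell-1} S(M_{\alpha_1,\dots,\alpha_k})\cdot M_{\alpha_{k+1},\dots,\alpha_\ell}.
\end{equation*}
I would apply the inductive hypothesis to every $S(M_{\alpha_1,\dots,\alpha_k})$ and then expand each resulting product $M_\gamma\cdot M_{\alpha_{k+1},\dots,\alpha_\ell}$ as a signed sum over overlapping shuffles using Proposition~\ref{propmultM}. The upshot is a large signed sum of $M_\delta$'s, and the goal is to show that the coefficient of $M_\delta$ equals $(-1)^{\ell(\alpha)+\binom{m_\alpha}{2}}$ precisely when $\delta\trianglerighteq\Rev(\alpha)$ and $0$ otherwise.

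The hard part is the combinatorial cancellation. I expect to build a sign-reversing involution on those triples (splitting index $k$, coarsening $\gamma\trianglerighteq\Rev(\alpha_1,\dots,\alpha_k)$, overlapping shuffle path $P$) that do not produce a weak coarsening of $\Rev(\alpha)$: at a well-chosen position one trades whether a given merging happens within the already reversed prefix (a $\trianglelefteq$-move) or at the junction with the suffix (a diagonal step in $P$), changing $k$ by one and hence flipping the overall sign. The fixed points should be exactly the weak coarsenings $\delta\trianglerighteq\Rev(\alpha)$, the ``not both dotted'' constraint in the definition of $\trianglelefteq$ coming naturally from the prohibition in Proposition~\ref{propmultM} against diagonal steps over doubly dotted cells (since $\theta_i^2=0$). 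The extra sign $(-1)^{\binom{m_\alpha}{2}}$ will be tracked through the $\sign(P)$ contributions and the anticommutations needed to reverse the order of the $m_\alpha$ dotted variables, organized via the identity $\binom{a+b}{2}=\binom{a}{2}+\binom{b}{2}+ab$ already exploited in \eqref{eqprodst}.
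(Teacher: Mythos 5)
Your proposal follows essentially the same route as the paper's proof: induction on $\ell(\ga)$ via the antipode recursion \eqref{eqantipo} combined with Proposition~\ref{coprod}, then a sign-reversing cancellation among the overlapping-shuffle expansions of $S(M_{\ga_1,\dots,\ga_i})\cdot M_{\ga_{i+1},\dots,\ga_\ell}$. The involution you sketch is exactly the paper's pairing, organized by whether the first part of the resulting composition is $\beta_1$, $\beta_1+\ga_{i+1}$, or $\ga_{i+1}$ (trading a merge at the prefix/suffix junction against a diagonal step, which shifts $i$ by one and flips the sign), with the uncancelled terms at $i=\ell-1$ giving precisely the $\gamma\trianglerighteq\Rev(\ga)$ and the fermionic sign tracked through $\binom{m-1}{2}+m-1=\binom{m}{2}$.
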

Before proving the proposition, we give a few examples:
\begin{example}
$$\quad S(M_{\dot{1}, 3, \dot{2}}) = (-1)^{3 + \binom{2}{2}}
\left ( M_{\dot{2},3 ,\dot{1}} + M_{\dot{5},\dot{1}} +
M_{\dot{2} ,\dot{4}} \right)$$
and
\begin{align*}
S(M_{\dot{3}, 2, \dot{2}, 1,\dot{1}}) & = (-1)^{5 +
\binom{3}{2}} \Bigl( M_{\dot{1},1,\dot{2},2 ,\dot{3}} +
M_{\dot{2},\dot{2},2, \dot{3}} + M_{\dot{2},\dot{2} ,\dot{5}} +
M_{\dot{2},\dot{4}, \dot{3}} + M_{\dot{1},1,\dot{2}, \dot{5}} \, + \Bigr. \\
& \qquad \qquad \qquad \qquad   \qquad   \qquad 
\Bigl.  M_{\dot{1},\dot{3} ,2,\dot{3}} + M_{\dot{1},\dot{3},
\dot{5}} + M_{\dot{1},\dot{5}, \dot{3}} + M_{\dot{1},1,\dot{4}, \dot{3}} +
M_{\dot{1},1,\dot{2}, \dot{5}} \Bigr )
\end{align*}
\end{example}

\begin{proof}
  The proof proceeds by induction on $\ell=\ell(\alpha)$.  It 
  generalizes that of \cite{E} (also given in \cite{GR})
  in the quasisymmetric case.

 We prove the base cases $\ell=0,1$ directly. 
  For $\ell=0$, we have $S(M_{\emptyset})=S(1)=(-1)^0 M_{\Rev(\emptyset)}$.
  For $\ell=1$, we have from Proposition~\ref{coprod} that
  $M_{r}$ and $M_{\dot r}$ are primitive elements.  Therefore,
  $S(M_{r}) =-M_{r}= (-1)^{1+\binom{0}{2}} M_{r}$ and $S(M_{\dot r}) =-M_{\dot r}= (-1)^{1+\binom{1}{2}} M_{\dot r}$ and the result holds for $\ell=1$.

  For $\ell(\alpha)\geq 2$,   we need to verify, by
 \eqref{eqantipo} and \eqref{coprodM},   that 
\begin{equation}
  S(M_{\alpha_1, \ldots, \alpha_\ell})  =   - \sum_{i=0}^{\ell-1} S(M_{\alpha_1, \ldots, \alpha_i} ) \cdot M_{\alpha_{i+1}, \ldots, \alpha_\ell}
\end{equation}
holds. By induction, this amounts to checking the following identity:
 \begin{equation} \label{eqanti1}
   (-1)^{\ell(\ga) + \binom{m_\ga}{2} }\sum_{\gamma\,  \trianglerighteq \,  \Rev(\ga)}M_{\gamma}
   =  \sum_{i=0}^{\ell-1} \sum_{\tiny{\beta  \trianglerighteq \alpha_i,\ldots , \alpha_1}}(-1)^{i+1+ \binom{m_\beta}{2}}  M_\beta \cdot M_{\alpha_{i+1}, \ldots , \alpha_\ell}
\end{equation}
 We will see that most terms cancel two by two in the expansions of the products
 $M_\beta \cdot M_{\alpha_{i+1}, \ldots, \alpha_\ell}$
 in \eqref{eqanti1}, and that those that do not cancel are exactly
 the $M_\gamma$'s such that $\gamma \trianglerighteq \Rev(\alpha)$ (with the right sign).

 Unless $\beta=\emptyset$, the first part of $\beta$ is of the form
 $\beta_1= \ga_i+\ga_{i-1}+\cdots \ga_h$ where $h\leq i$.  Hence each term $M_\gamma$ in the expansion of  $M_\beta \cdot M_{(\alpha_{i+1}, \ldots, \alpha_\ell)}$ is such that its first entry $\gamma_1$ has one of the possible three forms:
\begin{enumerate}
\item [I.]  $\gamma_1 = \ga_i+\ga_{i-1}+\cdots +\ga_h$
\item[II.] $\gamma_1 = \ga_{i+1} + (\ga_i+\ga_{i-1}+\cdots +\ga_h)$
\item[III.] $\gamma_1=\ga_{i+1}$
\end{enumerate}
We will see that, for $i=1,\dots,\ell-1$, the
terms of type I in the case $i$ cancel with those of type II in the
case $i-1$, and that similarly, the terms of type I in the case $i$ cancel with those of type III in the case $i-1$.

Suppose that $\beta$ and $\beta'$ are such that their
only difference occurs in the first entry: $\beta_1
= \ga_{i}+\ga_{i-1}+\cdots +\ga_h$ (type I in case $i$)
while $\beta_1' = \ga_{i-1}+\cdots + \ga_h$ (type II in case $i-1$).
Since $\beta_1=\beta_1'+\alpha_i$, 
the two paths $P_\gamma$ and $P_\gamma'$ in  Figure~\ref{casoIyII}
(representing type I and II respectively) produce the same dotted composition with signs given by
$(-1)^{i+1 + \binom{m_\beta}{2}}
\sign(P_\gamma)$ and $(-1)^{(i-1)+1 + \binom{m_\beta'}{2}}
\sign(P'_\gamma)$ respectively. If  $\alpha_i$ is not dotted, then $m_\beta =
m_\beta'$ and $\sign(P'_\gamma)= \sign(P_\gamma)$ which means that
the terms have opposite
signs.
Otherwise, $\alpha_i$ is dotted which implies that
$m_{\beta'} = m_\beta-1$ and
$\sign(P'_\gamma)= (-1)^{m_\beta -1}\sign(P_\gamma)$ since there are $m_\beta$ extra dots below the path $P_\gamma'$.  Using the fact that
$\binom{m-1}{2}+m-1=\binom{m}{2}$, wet get again that the terms have opposite
signs.

\begin{center}
\begin{figure}[ht] 
  \begin{tikzpicture}[fill opacity=1, scale=0.8,font=\footnotesize]
\def\r{3.5}
\def\b{3.5}
\def\con{3.8}
\def\co{8.4}
\def\coa{8.5}
\def\ct{23}
\def\cta{24.5}
\def\labelHt{-.45}
\def\permHt{1}
\def\smallpic{4}
\def\bigpic{5}
\def\permx{3.8}

\begin{scope}[shift={(0,0*\r)}]
\node at (.5,-0.3){\tiny $\beta_1$}; \node at (1.5,-0.3){\tiny
$\beta_2$}; \node at (2.5,-0.3){$\cdots$}; \node at (-.4,.5){\tiny
$\alpha_{i}$}; \node at (.5,.3){\tiny $\gamma_1$}; \node at
(-.3,1.5){$\vdots$}; \draw[help lines] (0,0) grid (4,2);

\draw[red,line width=1.5pt](0,0)-- ++(1,0);

\end{scope}

\begin{scope}[shift={(7,0*\r)}]
\node at (0.5,-0.3){\tiny $\beta_1'$}; \node at (1.7,-0.3){\tiny
$\beta_2$}; \node at (2.5,-0.3){$\cdots$}; \node at (-0.4,.5){\tiny
$\alpha_{i-1}$}; \node at (-0.4,1.4){\tiny $\alpha_{i}$}; \node at
(-0.2,2.1){$\vdots$}; \draw[help lines] (0,0) grid (4,2);

\node at (.4,.7){\tiny $\gamma_1$};
 \draw[red,line width=1.5pt](0,0)-- ++(1,1);

\end{scope}

\end{tikzpicture}
  \caption{Paths $P_\gamma$ and $P_\gamma'$ of type I and II}
  \label{casoIyII}
  \end{figure}
\end{center}

For the other case, suppose that
$\beta=(\beta_1, \cdots , \beta_k)$  (type I in case $i+1$) and $\beta'=(\alpha_{i+1},
\beta_1, \cdots , \beta_k)$  (type III in case $i$).  Then the two paths $P_\gamma$ and $P_\gamma'$ in  Figure~\ref{casoIIIyI}
(representing type I and II respectively) produce the same dotted composition with signs given by
 $(-1)^{i+1 \binom{m_\beta}{2}} \sign(P_\gamma)$ and $(-1)^{i+1+1
 \binom{m_\beta'}{2}} \sign(P_\gamma')$ respectively. If $\alpha_{i+1}$ is not
 dotted then $m_\beta' = m_\beta$ and $\sign(P'_\gamma)=
 \sign(P_\gamma)$ which means that the terms have opposite signs.
Otherwise, $\alpha_{i+1}$ is dotted which means that
 $m_\beta' = m_\beta +1$ and $\sign(P'_\gamma)=
\sign(P_\gamma) (-1)^{-m_\beta}$ since $P_\gamma$ has $m_\beta$ extra dots.
The terms are again seen to have opposite signs.
\begin{center}
\begin{figure}[ht]
  \begin{tikzpicture}[fill opacity=1, scale=0.8,font=\footnotesize]

\def\r{3.5}

\begin{scope}[shift={(0,0*\r)}]
\node at (.5,-0.3){\tiny $\beta_1$}; \node at (1.5,-0.3){\tiny
$\beta_2$}; \node at (2.5,-0.3){$\cdots$}; \node at (-0.4,.5){\tiny
$\alpha_{i+1}$}; \node at (.3,.5){\tiny $\gamma_1$}; \node at
(-.3,1.5){$\vdots$}; \draw[help lines] (0,0) grid (4,2);

\draw[red,line width=1.5pt](0,0)-- ++(0,1);

\end{scope}

\begin{scope}[shift={(7,0*\r)}]
\node at (0.5,-0.3){\tiny $\alpha_{i+1}$}; \node at (1.7,-0.3){\tiny
$\beta_1$};  \node at (2.5,-0.3){\tiny $\beta_2$};
 \node at (3.5,-0.3){$\cdots$}; \node at (-0.4,.5){\tiny
$\alpha_{i+2}$}; \node at (-0.2,1.5){$\vdots$}; \draw[help lines]
(0,0) grid (4,2);

\node at (.4,.3){\tiny $\gamma_1$};
 \draw[red,line width=1.5pt](0,0)-- ++(1,0);

\end{scope}

\end{tikzpicture}
  \caption{Paths $P_\gamma$ and $P_\gamma'$ of type I and III}
  \label{casoIIIyI}
  \end{figure}
\end{center}
We are left with the case $i=\ell-1$ for type II and III (see Figure~\ref{casoIIIyIb}) which corresponds to
$(-1)^{\ell+\binom{m_\alpha}{2}}M_\beta M_{(\alpha_\ell)}$, with $\beta \trianglerighteq (\alpha_{\ell-1},\ldots,\alpha_1)$.   It is easy to see that those are exactly the $\gamma$'s such that  $\gamma \trianglerighteq \Rev(\alpha)$.
\begin{center}
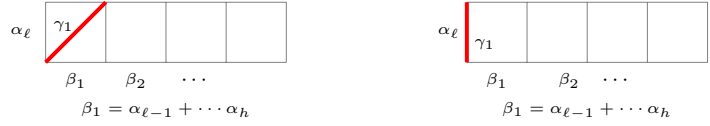
\begin{figure}[ht]
  \begin{tikzpicture}[fill opacity=1, scale=0.8,font=\footnotesize]

\def\r{3.5}

\begin{scope}[shift={(0,0*\r)}]
\node at (.5,-0.3){\tiny $\beta_1$}; \node at (1.5,-0.3){\tiny
$\beta_2$}; \node at (2.5,-0.3){$\cdots$}; \node at (-.4,.5){\tiny
$\alpha_{\ell}$}; \node at (.3,.6){\tiny $\gamma_1$};  \draw[help
lines] (0,0) grid (4,1);

\node at (2,-0.8){\tiny $\beta_1=\alpha_{\ell-1}+\cdots \alpha_h$};

 \draw[red,line width=1.5pt](0,0)-- ++(1,1);

\end{scope}

\begin{scope}[shift={(7,0*\r)}]
\node at (0.5,-0.3){\tiny $\beta_1$}; \node at (1.7,-0.3){\tiny
$\beta_2$};
 \node at (2.5,-0.3){$\cdots$}; \node at (-0.3,.5){\tiny
$\alpha_\ell$}; \draw[help lines] (0,0) grid (4,1);

\node at (2,-0.8){\tiny $\beta_1=\alpha_{\ell-1}+\cdots \alpha_h$};

\node at (.3,.3){\tiny $\gamma_1$};
 \draw[red,line width=1.5pt](0,0)-- ++(0,1);

\end{scope}

\end{tikzpicture}
  \caption{Paths of type II and III}
  \label{casoIIIyIb}
  \end{figure}
\end{center}

\end{proof}

\subsection{Fundamental quasisymmetric functions in superspace} \label{Secfund}
The fundamental quasisymmetric functions provide a very important basis of
${\rm QSym}$ whose properties are reminiscent of those of Schur functions
\cite{GR,LMW, S}.
It is thus natural to look for their generalization to superspace.
But just as there are two natural extensions to superspace of Schur functions, $s_\Lambda$ and $\bar s_\Lambda$, there are two natural candidates to generalize the fundamental quasisymmetric functions.  They are defined using the 
partial orders introduced in Section~\ref{subpartial}.
\begin{definition}
  \label{def:fundamental}
The {\defstyle fundamental quasisymmetric function in superspace} $\funda$ is
\begin{equation}
  \funda=\sum_{\gb  \preccurlyeq \ga}M_{\gb}
  \end{equation}
while the {\defstyle fundamental quasisymmetric function in superspace} $\bar L_\alpha$ is
\begin{equation}
\bar L_\alpha= \sum_{\gb \trianglelefteq  \ga}M_{\gb}
\end{equation}
\end{definition}
\begin{example}
  We have
  $$
  L_{3,\dot{4},2} = M_{3,\dot{4},2}+  M_{2,1,\dot{4},2}+
M_{1,2,\dot{4},2}+  M_{1,1,1,\dot{4},2}+  M_{3,\dot{4},1,1}+
M_{2,1,\dot{4},1,1}+  M_{1,2,\dot{4},1,1}+  M_{1,1,1,\dot{4},1,1}
  $$
 and
\begin{align}
\bar{L}_{\dot{2},2} &= M_{\dot{2},2}+ M_{\dot{0},2,2}+
M_{2,\dot{0},2} + M_{\dot{1},1,2}+ M_{1,\dot{1},2}+
M_{\dot{0},1,1,2}+ M_{1,\dot{0},1,2}+ M_{1,1,\dot{0},2}   \nonumber \\
 &+ M_{\dot{2},1,1}+ M_{\dot{0},2,1,1}+ M_{2,\dot{0},1,1} +
M_{\dot{1},1,1,1}+ M_{1,\dot{1},1,1}+ M_{\dot{0},1,1,1,1}+
M_{1,\dot{0},1,1,1}  
+ M_{1,1,\dot{0},1,1} \nonumber
\end{align}
\end{example}
The study of the functions $L_\alpha$ and $\bar L_\alpha$ turns out to
be somewhat more involved than in the usual quasisymmetric case.
For instance, the action of the antipode $S$ on $L_\alpha$ and $\bar L_\alpha$
is not trivial while in ${\rm QSym}$ the antipode $S$ acting
on $L_\alpha$ is simply the element $L_{\alpha^t}$ (up to a sign), where $\alpha^t$ is the composition whose corresponding ribbon is the transposed of that of $\alpha$.
In fact, $S(L_\alpha)$ and $S(\bar L_{\alpha})$ are interesting bases in their own right just as one could say that their counterparts in ${\mathbf \Lambda}$,
$\omega s_\Lambda$ and $\omega \bar s_\Lambda$, are (except that in the latter case those bases are, from Proposition~\ref{propdual}, essentially dual to the
$s_\Lambda$ and $\bar s_\Lambda$ bases).
Because of the intricacies of the combinatorics at play, we will study the fundamental quasisymmetric functions in superspace in a forthcoming article \cite{FLP}
where we will see for instance that the products of $L_\alpha$'s or $\bar L_\alpha$'s are described using new types of shuffles (called weak and strong respectively), and that the Schur functions in superspace $s_\Lambda$ and $\bar s_\Lambda$
expand naturally in terms of the $L_\alpha$'s and $\bar L_\alpha$'s respectively.

\section{Noncommutative symmetric functions in superspace}

The Hopf algebra  ${\rm NSym}$ of noncommutative symmetric functions is dual to 
${\rm QSym}$.  This duality can be extended to superspace given that ${\rm sQSym}$ is graded and that each of its homogeneous component is finite dimensional.
In the following, we generalize to superspace the presentation of \cite{GR}.
\begin{definition} Let  ${\rm sNSym}$  be the Hopf dual of ${\rm sQSym}$
with dual pairing  $\langle \cdot , \cdot \rangle : {\rm sNSym} \otimes {\rm sQSym} \to \mathbb Q$.  The Hopf algebra ${\rm sNSym}$  has a  $\mathbb Q$-basis  $\{H_\alpha \}$  dual to the monomial quasisymmetric functions in superspace, that is, such that
  \begin{equation}
   \langle H_\alpha, M_\beta \rangle = \delta_{\alpha \beta}
  \end{equation}  
  \end{definition}  
We call ${\rm sNSym}$ the ring of {\defstyle noncommutative symmetric functions in superspace}.
\begin{proposition}  Let $H_m= H_{(m)}$ and $\tilde H_n=H_{(\dot n)}$
  for $m\geq 1$ and $n \geq 0$.  We have that
  \begin{equation} \label{eqcong}
   {\rm sNSym} \cong \mathbb Q\langle H_1,H_2,\dots; \tilde H_0,\tilde H_1,\dots \rangle
  \end{equation}
  the free associative algebra with noncommuting generators $\{  H_1,H_2,\dots; \tilde H_0,\tilde H_1,\dots \}$ and coproduct defined by
  \begin{equation} \label{eqNcop}
    \Delta H_n  = \sum_{i+j=n } H_i \otimes H_j \qquad {\rm and} \qquad
\Delta \tilde H_i = \sum_{k+\ell=i} 
 \left( \tilde H_k \otimes H_\ell + H_\ell \otimes \tilde H_k \right)
  \end{equation}  
\end{proposition}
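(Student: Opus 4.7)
The plan is to simultaneously establish the isomorphism \eqref{eqcong} and the coproduct formulas \eqref{eqNcop} by dualizing the product and coproduct of ${\rm sQSym}$ on the monomial basis. This mirrors the classical derivation of ${\rm NSym}$ from ${\rm QSym}$, with the only additional bookkeeping coming from the Koszul-style signs of the topologist's twist map \eqref{twistmap}.

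For the algebra structure, I would first compute $H_\beta H_\gamma$ for arbitrary dotted compositions $\beta,\gamma$ via the duality
$$\langle H_\beta H_\gamma,\,M_\alpha\rangle \;=\; \langle H_\beta\otimes H_\gamma,\,\Delta M_\alpha\rangle,$$
plugging in $\Delta M_\alpha=\sum_{\beta'\cdot\gamma'=\alpha} M_{\beta'}\otimes M_{\gamma'}$ from Proposition~\ref{coprod} and evaluating the tensor pairing via \eqref{scaltau}. Only $\alpha=\beta\cdot\gamma$ contributes, with value $\pm 1$ depending only on the fermionic degrees of $\beta$ and $\gamma$; hence $H_\beta H_\gamma=\pm H_{\beta\cdot\gamma}$. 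Iterating, each basis element $H_\alpha$ equals, up to a known sign depending only on the fermionic degree of $\alpha$, the noncommutative product $H_{\alpha_1}^{?}\cdots H_{\alpha_\ell}^{?}$ in the proposed generators $\{H_i,\tilde H_j\}$ (the symbol $?$ indicating whether the corresponding part is dotted). Because dotted compositions are in bijection with noncommutative words in these generators, the resulting graded algebra map from the free algebra $\mathbb Q\langle H_i,\tilde H_j\rangle$ onto ${\rm sNSym}$ is an isomorphism.

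For the coproduct on generators, I would dualize the overlapping-shuffle product of Proposition~\ref{propmultM}: writing $\Delta H_\alpha=\sum c^\alpha_{\beta,\gamma}\,H_\beta\otimes H_\gamma$ and using
$$\langle \Delta H_\alpha,\,M_\beta\otimes M_\gamma\rangle \;=\; \langle H_\alpha,\,M_\beta M_\gamma\rangle,$$
each coefficient is, up to the twist sign, the coefficient of $M_\alpha$ in the overlapping-shuffle expansion of $M_\beta M_\gamma$. For $\alpha=(n)$ a single undotted part of size $n$ arises only for $\beta=(i)$, $\gamma=(j)$ with $i+j=n$, through the diagonal step on the $1\times 1$ grid (no dots present), yielding $\Delta H_n=\sum_{i+j=n}H_i\otimes H_j$ with the convention $H_0=1$. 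For $\alpha=(\dot n)$ the single dotted part appears only from the pairs $\bigl((\dot k),(n-k)\bigr)$ and $\bigl((k),(\dot{n-k})\bigr)$, each via a diagonal step of a dotless $1\times 1$ grid; since one fermionic degree is always zero in each pair, the twist sign is trivial and the formula for $\Delta\tilde H_n$ follows.

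The main subtlety is controlling the Koszul signs introduced by the twist when dualizing. They rescale each $H_\beta H_\gamma$ by $\pm 1$ relative to $H_{\beta\cdot\gamma}$ but can never produce linear relations among distinct noncommutative monomials in the generators, so the free algebra conclusion is unaffected; one may even absorb these signs into a rescaled basis $\tilde H_\alpha:=\pm H_\alpha$ to obtain a clean multiplicative identification. Once this sign bookkeeping is settled, the enumeration of $(\beta,\gamma)$ pairs producing a single-part composition is a quick case analysis on the path picture preceding Proposition~\ref{propmultM}, and both halves of the proposition fall out together.
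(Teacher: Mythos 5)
Your proposal is correct and follows essentially the same route as the paper: dualize the deconcatenation coproduct of Proposition~\ref{coprod} to get $H_\beta H_\gamma = H_{\beta\cdot\gamma}$ and hence the identification with the free algebra, then dualize the overlapping-shuffle product of Proposition~\ref{propmultM} restricted to single-part outputs to obtain the coproduct on the generators. The only difference is your extra hedging over Koszul signs in the product (the paper states $H_\beta H_\gamma = H_{\beta\cdot\gamma}$ with no sign); as you correctly observe, any such signs could not affect freeness, and in the coproduct computation one factor always has fermionic degree zero, so the formulas come out clean either way.
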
  
\begin{proof}
  Using the coproduct $\Delta M_\alpha= \sum_{\beta \cdot\gamma=\alpha} M_\beta \otimes M_\gamma$, we have by duality that
  \begin{equation}
    H_\beta H_\gamma = H_{\beta\cdot\gamma}
  \end{equation}  
  This readily implies that $H_\alpha=H_{\alpha_1} \cdots H_{\alpha_l}$ where
  $H_{\dot r}:=\tilde H_r$, which proves \eqref{eqcong}.

  Since $H_m$ and $\tilde H_{n}$ are dual to $M_{m}$ and $M_{\dot n}$ respectively,
  we only need to know which products $M_\alpha M_\beta$ can generate a term of the form $M_m$ or $M_{\dot n}$.
From Proposition~\ref{propmultM}, we have
  \begin{equation}
   M_a  M_b  = M_{a+b} + M_{a,b}+M_{b,a}\,, \quad 
   M_{\dot a} M_{b}= M_{\dot{(a+b)}}+M_{\dot a,b}+M_{b, \dot a} \quad
   {\rm and } \quad M_{a} M_{\dot b}= M_{\dot{(a+b)}}+M_{a,\dot b}+M_{\dot b, a}
  \end{equation}  
By duality, \eqref{eqNcop} holds.
  \end{proof}
\begin{corollary} The algebra morphism $\pi: {\rm sNSym} \to {\bf \Lambda}$ defined by
  \begin{equation}
    \pi(H_m)=h_m \qquad {\rm and} \qquad \pi(\tilde H_n)=\tilde h_n
  \end{equation}  
  is a Hopf algebra surjection such that
  \begin{equation} \label{eqduali}
     \langle \! \langle \, \pi(F), g \,  \rangle \! \rangle = \langle F, \iota(g) \rangle 
  \end{equation}
  where $\iota: {\bf \Lambda} \to  {\rm sQSym}$ is the inclusion map, and where we are using our usual scalar product on ${\bf \Lambda}$. The relationship between ${\bf \Lambda},  {\rm sQSym}$ and $ {\rm sNSym}$
is  illustrated in Figure~\ref{figtri}.
\begin{figure}
\begin{tikzpicture}  
\def\a{4}
\def\b{4}
\node[ellipse,fill=blue!40] (nsym) at (-\a,\b) {sNSym};
\node[ellipse,fill=blue!40] (qsym) at (\a,\b) {sQSym};
\node[ellipse,fill=blue!40] (sym) at (-\a*0,\b*0) {${\bf \Lambda}$};
\draw[->>,ultra thick, red]  (nsym)--(sym)node[midway,left,blue]{$\pi$};
\draw[<->,ultra thick, red,out=45,in=135](nsym)--(qsym) node[midway,above, blue] {$\langle H_{\alpha},M_{\beta}\rangle=\delta_{\alpha\beta}$};
\draw[right hook->,ultra thick, red](sym)--(qsym) node[midway,right, blue]{$\iota$};
\end{tikzpicture}
\caption{Relationship between ${\bf \Lambda},  {\rm sQSym}$ and $ {\rm sNSym}$} \label{figtri}
\end{figure}
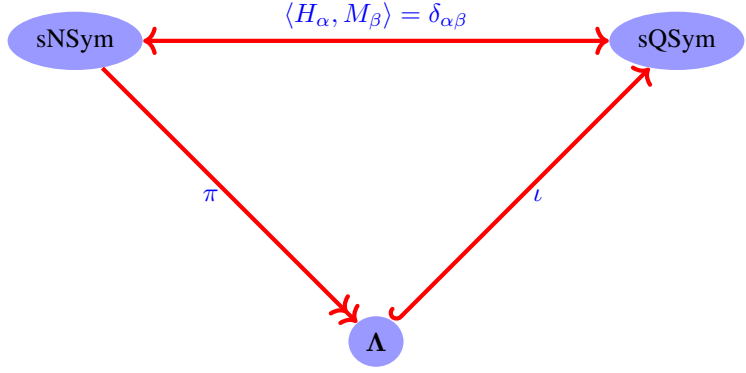
\end{corollary}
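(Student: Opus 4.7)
The plan is to verify the claim in three blocks: (i) $\pi$ is a well-defined surjective algebra morphism, (ii) $\pi$ intertwines the coproducts, counits and antipodes, and (iii) the duality relation \eqref{eqduali} holds. The main obstacle lies in bookkeeping signs in (iii), so I will outline (i) and (ii) briefly and then explain the strategy for (iii) in more detail.

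\emph{Steps (i)--(ii).} Since ${\rm sNSym}\cong\mathbb Q\langle H_m,\tilde H_n\rangle$ is a free associative algebra, the assignments $H_m\mapsto h_m$, $\tilde H_n\mapsto\tilde h_n$ extend uniquely to an algebra morphism $\pi$, and surjectivity follows because $\{h_m,\tilde h_n\}$ generates ${\bf \Lambda}$ (every $h_\Lambda$ is a signed product of these generators). To check that $\pi$ is a coalgebra morphism, observe that both $\Delta\circ\pi$ and $(\pi\otimes\pi)\circ\Delta$ are algebra morphisms ${\rm sNSym}\to{\bf \Lambda}\otimes{\bf \Lambda}$; comparing \eqref{eqNcop} with \eqref{eqcoprodh} shows they agree on the generators $H_m,\tilde H_n$, hence on all of ${\rm sNSym}$. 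The counit compatibility is immediate since both counits are projection onto degree zero, and $\pi$ commutes with the antipodes automatically because both Hopf algebras are graded connected (so the antipode is the unique convolution inverse of the identity, which is preserved by any bialgebra morphism).

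\emph{Step (iii).} To verify \eqref{eqduali}, by bilinearity it suffices to test on the bases $F=H_\alpha$ of ${\rm sNSym}$ and $g=m_\Lambda$ of ${\bf \Lambda}$. On the right-hand side, expanding $\iota(m_\Lambda)=\sum_{\beta:\tilde\beta=\gamma}(-1)^{\sigma(\beta)}M_\beta$ via \eqref{menM}, where $\gamma$ is the dotted composition attached to $\Lambda$, and using that $\{H_\alpha\}$ is dual to $\{M_\alpha\}$, yields $\langle H_\alpha,\iota(m_\Lambda)\rangle=(-1)^{\sigma(\alpha)}$ when $\tilde\alpha=\gamma$ and $0$ otherwise. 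For the left-hand side, factor $\pi(H_\alpha)=h_{\alpha_1}\cdots h_{\alpha_l}$ with the convention $h_{\dot n}:=\tilde h_n$. Since the $\tilde h_n$'s anticommute while the $h_m$'s lie in fermionic degree zero (hence central), reordering the dotted factors of $\pi(H_\alpha)$ into decreasing order produces a sign equal to the sign of the permutation on the dotted positions; this is exactly $\sigma(\alpha)$. Hence $\pi(H_\alpha)=(-1)^{\sigma(\alpha)}h_\Omega$ whenever $\alpha$ has distinct dotted entries, with $\Omega$ the superpartition obtained by sorting $\alpha$. Pairing with $m_\Lambda$ via \eqref{dualhm} then gives $\langle\!\langle\pi(H_\alpha),m_\Lambda\rangle\!\rangle=(-1)^{\sigma(\alpha)}\delta_{\Omega,\Lambda}$, matching the right-hand side.

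\emph{Main obstacle.} The delicate point is reconciling the two sources of signs: the rearrangement sign $\sigma(\alpha)$ appearing in \eqref{menM} for $m_\Lambda$ expanded into $M_\beta$'s, and the anticommutation sign produced by reordering the $\tilde h_{\alpha_i}$'s when writing $\pi(H_\alpha)$ as a multiple of $h_\Omega$. Once one verifies that both signs are computed from the same permutation on the dotted positions of $\alpha$, the edge case of repeated dotted entries is handled symmetrically: $\tilde h_n^2=0$ kills the left-hand side, while the requirement that a superpartition $\Lambda$ has distinct dotted parts kills the right-hand side.
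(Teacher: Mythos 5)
Your proposal is correct and follows essentially the same route as the paper: surjectivity from the fact that the $h_m,\tilde h_n$ generate ${\bf \Lambda}$, the coalgebra property by comparing \eqref{eqNcop} with \eqref{eqcoprodh} on generators, the antipode for free, and the duality \eqref{eqduali} verified on the bases $\{H_\alpha\}$ and $\{m_\Lambda\}$ using \eqref{dualhm} and \eqref{menM}. Your step (iii) merely spells out the sign bookkeeping (that $\pi(H_\alpha)=(-1)^{\sigma(\alpha)}h_\Omega$ and the repeated-dotted-entry degeneration) which the paper leaves implicit in its one-line computation.
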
 \label{corofinal}
\begin{proof}  Since $\Lambda$ is generated by $h_1,h_2,\dots; \tilde h_0,\tilde h_1,\dots$, the map $\pi$ is automatically surjective.  Comparing
  \eqref{eqcoprodh} and \eqref{eqNcop}, we get that $\pi$ is also
  a coalgebra morphism.  The map $\pi$ is then a Hopf algebra morphism (that is, it respects the antipode) since any bialgebra morphism is a Hopf algebra morphism \cite{GR}.

  We will prove \eqref{eqduali} by showing that it holds on 
the $\{H_\alpha \}$ and $\{m_\Lambda\}$ basis.  Using \eqref{dualhm} and the notation of \eqref{menM}, this is indeed the case since
  \begin{equation}
 \langle \! \langle \, \pi(H_\alpha), m_\Lambda \,  \rangle \! \rangle = (-1)^{\sigma(\alpha)}\delta_{\tilde \alpha \gamma} =  \langle H_{\alpha},  \sum_{\alpha \, : \, \tilde \beta=\gamma} (-1)^{\sigma(\beta)} M_\beta \rangle
   \end{equation} 
where we recall that $\gamma=(\dot \Lambda^a_1, \dots, \dot \Lambda^a_m,\Lambda^s_1,\dots,\Lambda^s_l)$ is $\Lambda$ considered as a dotted composition. 
\end{proof}  

\begin{acknow} We would like to thank one of the reviewers for pointing out that a sign was missing in the anti-homomorphism relation satisfied by the antipode.
 S.F. thanks the Universidad de Talca for its warm hospitality during three extended stays. 
  This work was supported in part by
 the Simons Foundation (grant \#359602, S.F.) and by
FONDECYT (Fondo Nacional de Desarrollo Cient\'{\i}fico y Tecnol\'ogico de Chile) through the  {initiation grant \#{11140280}} (M.E. P.) and regular grant \#{1170924} (L. L.).
\end{acknow}

\end{document}